\newtheorem{theorem}{Theorem}
\newtheorem{proposition}{Proposition}
\newtheorem{lemma}{Lemma}
\newtheorem{claim}{Claim}
\begin{document}
\begin{frontmatter}

\title{Asymptotic  Critical   Radii in Random Geometric Graphs over Higher-dimensional Regions}

\author{Jie Ding} \ead{jieding78@hotmail.com}
\address{School of Computer, Jiangsu University of Science and Technology,  Zhenjiang 212100, China}

 \author{Xiang Wei} 
\address{School of Computer, Jiangsu University of Science and Technology,  Zhenjiang 212100, China}

\author{Shuai Ma} \ead{mashuai@buaa.edu.cn}
\address{ SKLCCSE Lab,
Beihang University, Beijing 100191, China}

%

\begin{abstract}
This article presents  the precise asymptotical distribution of two types of critical transmission radii, defined in terms of
$k-$connectivity and the minimum vertex degree, for a random geometry graph distributed over a  unit-volume region $\Omega\subset \mathbb{R}^d (d\geq 3)$.
\end{abstract}

\begin{keyword}
Random geometry graph, Asymptotic critical   radius
\end{keyword}

\end{frontmatter}

\section{Introduction and main results}
\label{sec-intro}

\par Let $\chi_n$ be a uniform $n$-point process over a
  convex region $\Omega\subset\mathbb{R}^d(d\geq2)$, i.e., a set of $n$ independent
points each of which is uniformly distributed over $\Omega$, and every pair of points whose Euclidean distance less than $r_n$ is
connected with an undirected edge. So  a random geometric
graph $G(\chi_n,r_n)$ is obtained.

 $k-$connectivity and
the smallest vertex degree are two interesting topological
properties of a random geometry graph. A graph $G$ is said to be
$k-$connected if there is no set of $k-1$ vertices whose removal
would disconnect the graph. Denote by $\kappa$ the connectivity of
$G$, being the maximum $k$ such that $G$ is $k-$connected.
The minimum vertex degree of $G$ is denoted by $\delta$.   Let
$\rho(\chi_n;\kappa\geq k)$ be the minimum $r_n$ such that
$G(\chi_n,r_n)$ is $k-$connected and $\rho(\chi_n;\delta\geq k)$ be
the minimum $r_n$ such that $G(\chi_n,r_n)$ has the smallest degree $k$, respectively.

When $\Omega$ is a unit-area convex region on $\mathbb{R}^2$, the precise probability distributions of these two types of critical radii have been given in an asymptotic manner:
\begin{theorem}\label{thm:combined-Main}(\cite{Ding-RGG2018, Ding-RGG2025})
Let $\Omega\subset \mathbb{R}^2$ be a unit-area convex region such that the length of
the boundary $\partial\Omega$  is $l$, $k\geq 0$ be an integer and $c>0$ be a constant.
\par (i) If $k>0$, let
     \begin{equation*}\label{eq:Theorem-formula-1}
        r_n=\sqrt{\frac{\log n+(2k-1)\log\log n+\xi}{\pi n }},
     \end{equation*}
where $\xi$ satisfies
$$ \left\{\begin{array}{cc}
     \xi=-2\log \left(\sqrt{e^{-c}+\frac{\pi l^2}{64}}-\frac{l\sqrt{\pi}}{8}\right), & k=1, \\
      \xi=2\log \left(\frac{l\sqrt{\pi}}{2^{k+1}k!}\right)+2c, & k>1. \\
    \end{array}
        \right.
$$
\par (ii) If $k=0$, let
     \begin{equation*}\label{eq:Theorem-formula-2}
        r_n=\sqrt{\frac{\log n+c}{\pi n }}.
     \end{equation*}
Then
    \begin{equation}\label{eq:Theorem-formula-3}
       \lim_{n\rightarrow\infty} \frac n{k!}\int_{\Omega}\left(n|B(x,r_n)\cap\Omega|\right)^k e^{-n|B(x,r_n)\cap\Omega|}dx= e^{-c},
    \end{equation}
and therefore,  the probabilities of the two events
$\rho(\chi_n;\delta\geq k+1)\leq r_n$ and $\rho(\chi_n;\kappa\geq
k+1)\leq r_n$ both converge to $\exp\left(-e^{-c}\right)$  as $n\rightarrow\infty$.
\end{theorem}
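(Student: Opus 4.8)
The plan is to split the statement into two essentially independent parts: (a) the analytic asymptotics of the integral in \eqref{eq:Theorem-formula-3}, which is what fixes the relation between $\xi$ and $c$, and (b) the probabilistic step identifying this integral as the quantity that governs both critical radii. For (b) it is cleanest to work first with a homogeneous Poisson process $\mathcal P_n$ on $\Omega$ of intensity $n$ and to pass to the binomial process $\chi_n$ by de-Poissonization at the end.

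\emph{Asymptotics of the integral.} I would write $\Omega=\Omega_n^{\circ}\cup(\Omega\setminus\Omega_n^{\circ})$ with $\Omega_n^{\circ}=\{x\in\Omega:\operatorname{dist}(x,\partial\Omega)>r_n\}$. On $\Omega_n^{\circ}$ one has $|B(x,r_n)\cap\Omega|=\pi r_n^2$ exactly, while $|\Omega_n^{\circ}|\to1$ and $n\pi r_n^2=\log n+(2k-1)\log\log n+\xi$ (respectively $\log n+c$ when $k=0$); hence the interior part of the integral equals $\tfrac1{k!}(\log n)^{1-k}e^{-\xi}(1+o(1))$, which is $e^{-\xi}$ when $k=1$, tends to $0$ when $k>1$, and gives $e^{-c}$ in the $k=0$ regime. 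On the boundary collar I would parametrize a point by its arc-length position on $\partial\Omega$ and its distance $t\in(0,r_n]$ to $\partial\Omega$, use the half-plane approximation $|B(x,r_n)\cap\Omega|=r_n^2\,\phi(t/r_n)+O(r_n^3)$ with $\phi(s)=\pi-\arccos s+s\sqrt{1-s^2}$, substitute $s=t/r_n$, and run a Laplace-type estimate: the integrand concentrates as $s\to0$, where $\phi(s)=\tfrac\pi2+2s+O(s^3)$, and collecting the powers of $n$ and $\log n$ produces the boundary contribution $\tfrac{l\sqrt\pi}{2^{k+1}k!}e^{-\xi/2}(1+o(1))$. Adding the two pieces and equating the limit to $e^{-c}$ yields, for $k=1$, the identity $e^{-\xi}+\tfrac{l\sqrt\pi}{4}e^{-\xi/2}=e^{-c}$, a quadratic in $e^{-\xi/2}$ whose positive root is exactly the stated $\xi$, and for $k>1$ the relation $\tfrac{l\sqrt\pi}{2^{k+1}k!}e^{-\xi/2}=e^{-c}$, again the stated $\xi$; for $k=0$ the identity \eqref{eq:Theorem-formula-3} is immediate. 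The main care here is that $\partial\Omega$ is only assumed convex, so one must check that boundary corners and curvature contribute only lower-order terms to the collar integral: a corner neighbourhood has area $O(r_n^2)$ and an exponentially small integrand there (since $|B(x,r_n)\cap\Omega|\ge c\,r_n^2$ forces $e^{-n|B(x,r_n)\cap\Omega|}\le n^{-\Theta(1)}$), so it is negligible, while curvature perturbs $\phi$ only at relative order $r_n$.

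\emph{From the integral to the radii.} By the Mecke formula, $I_n:=\tfrac n{k!}\int_\Omega(n|B(x,r_n)\cap\Omega|)^k e^{-n|B(x,r_n)\cap\Omega|}\,dx$ is exactly the expected number of vertices of degree $k$ in $G(\mathcal P_n,r_n)$. Redoing the interior/collar computation for degree $j<k$ shows the expected number of vertices of degree $<k$ is $O((\log n)^{-1})=o(1)$, so with probability $1-o(1)$ the event $\{\delta(G(\mathcal P_n,r_n))\ge k+1\}$ coincides with the event that no vertex has degree exactly $k$. A Chen--Stein / Poisson approximation — exploiting that two degree-$k$ vertices lie, with overwhelming probability, at distance $\gg r_n$, so the associated indicator events are nearly independent — then gives that the number of degree-$k$ vertices converges in distribution to a Poisson variable of mean $\lim I_n=e^{-c}$; hence $\Pr(\delta\ge k+1)\to\exp(-e^{-c})$, and de-Poissonization transfers this to $\Pr(\rho(\chi_n;\delta\ge k+1)\le r_n)$. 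For the $k$-connectivity statement, $\kappa\le\delta$ gives $\rho(\chi_n;\kappa\ge k+1)\ge\rho(\chi_n;\delta\ge k+1)$, hence $\Pr(\rho(\chi_n;\kappa\ge k+1)\le r_n)\le\Pr(\rho(\chi_n;\delta\ge k+1)\le r_n)$; the matching lower bound reduces to showing that the event ``$\delta\ge k+1$ but $G(\chi_n,r_n)$ is not $(k+1)$-connected'' has probability $o(1)$. Following Penrose's method, a non-$(k+1)$-connected graph with $\delta\ge k+1$ must contain a set of at most $k$ vertices whose deletion splits off a ``small'' cluster $C$; one estimates the probability of such a configuration by a union bound over the possible clusters, using that an annular region of area $\gtrsim r_n^2$ around $C$ is forced to be empty of points, and at the present radius (critical for $\delta\ge k+1$) the resulting sum is $o(1)$ because configurations of this type only become non-negligible at a radius with a strictly smaller $\log\log n$ correction.

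I expect the last point — the uniform exclusion of small separated clusters needed for the connectivity lower bound — to be the main obstacle, since it requires the delicate geometric union bounds characteristic of this area; by contrast the asymptotics of the integral, although it needs attentive boundary bookkeeping, is comparatively routine, and the Poisson approximation for the degree-$k$ count is standard once its mean has been pinned down.
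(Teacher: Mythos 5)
Your proposal is correct and follows essentially the same route as the paper (and the cited sources where Theorem~\ref{thm:combined-Main} is actually proved): an interior/boundary-collar decomposition with a Laplace-type evaluation of the collar integral via the half-plane cap area (the $d=2$ instance of the paper's Lemma~\ref{lemma:a(t)} and Claims~\ref{pro:Omega(0)}--\ref{pro:Omega(1,1)}), Palm theory plus Chen--Stein Poisson approximation and de-Poissonization for the minimum-degree radius, and Penrose's Propositions~5.1--5.2 to rule out small separated clusters and conclude $\rho(\chi_n;\kappa\ge k+1)=\rho(\chi_n;\delta\ge k+1)$ w.h.p. Your bookkeeping of the two competing contributions (interior $e^{-\xi}(\log n)^{1-k}$ versus boundary $\tfrac{l\sqrt{\pi}}{2^{k+1}k!}e^{-\xi/2}$), which explains the quadratic form of $\xi$ at $k=1$, matches the stated constants.
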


This theorem  firstly reveals how the region shape impacts on the critical transmission ranges,
  generalising the previous work~\cite{DH-largest-NN, Penrose-RGG-book,Penrose-k-connectivity,PWan04:Asymptotic-critical-transmission-MobileHoc, PJWan-IT-asymptotic-radius} in which only regular regions like disks or squares are considered.

  For a unit-volume region $\Omega$ in $\mathbb{R}^3$,  satisfing the following assumption:\\
\textbf{Assumption}~I: $\liminf_{r\rightarrow 0} \inf_{x\in\Omega}\frac{\mathrm{Vol}(B(x,r)\cap \Omega )}{\mathrm{Vol}(B(x,r))}>0,$ \\
the asymptotic distribution of the critical radii are demonstrated in the following Theorem~\ref{thm:Main3D}.

\begin{theorem}(\cite{Ding2022-3DRadii} with modifications)\label{thm:old-3}
Suppose $\Omega\subset \mathbb{R}^3$ is a simply connected compact region with unit-volume,
satisfying the above~\textbf{Assumption}~I.
The boundary $\partial\Omega$ of the region has finite area $\mathrm{Area}(\partial\Omega)$, $k\geq 0$ is an integer and $c>0$ is a constant.
Let
 \begin{equation}\label{eq:Theorem-radius}
 r_n= \left(\frac{\log n+(\frac{3k}{2}-1)\log\log n+\xi}{  \pi n  }\right)^{\frac13},
\end{equation}
where   $\xi$  solves
\begin{equation}\label{eq:Theorem-radius-1}
\mathrm{Area}(\partial\Omega)\pi^{-\frac{1}{3}} e^{-\frac{2\xi}{3}}\left(\frac{2}{3}\right)^k\frac{1}{k!} =e^{-c}.
\end{equation}
Then
    \begin{equation}\label{eq:Theorem-formula-3}
       \lim_{n\rightarrow\infty} \frac n{k!}\int_{\Omega}\left(n|B(x,r_n)\cap\Omega|\right)^k e^{-n|B(x,r_n)\cap\Omega|}dx= e^{-c},
    \end{equation}
and therefore, the probabilities of the two events
$\rho(\chi_n;\delta\geq k+1)\leq r_n$ and $\rho(\chi_n;\kappa\geq
k+1)\leq r_n$ both converge to $\exp\left(-e^{-c}\right)$  as $n\rightarrow\infty$.
\end{theorem}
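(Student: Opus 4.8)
The plan is to first establish the analytic identity \eqref{eq:Theorem-formula-3} by a Laplace-type estimate of the integral, and then to deduce the two probabilistic statements from it via the by-now-standard reduction of $(k+1)$-connectivity to the minimum-degree threshold for random geometric graphs.

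\emph{The integral.} Write $t(x)=\mathrm{dist}(x,\partial\Omega)$ and $L_n=\log n+(\tfrac{3k}{2}-1)\log\log n+\xi$, so that $n\pi r_n^3=L_n$, and split $\Omega$ into the interior part $\{t(x)\ge r_n\}$ and the boundary layer $\{t(x)<r_n\}$. On the interior part one has $|B(x,r_n)\cap\Omega|=\tfrac43\pi r_n^3$, hence $n|B(x,r_n)\cap\Omega|=\tfrac43 L_n$, and the contribution to the integral is at most $\tfrac{n}{k!}\big(\tfrac43 L_n\big)^k e^{-\frac43 L_n}$; since $e^{-\frac43 L_n}$ carries the factor $n^{-4/3}$, this is $O\big(n^{-1/3}(\log n)^{4/3-k}\big)\to 0$. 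So, in contrast with the planar case where the interior term survives for small $k$, here only the boundary layer matters. In the boundary layer I would use tubular-neighbourhood (equivalently coarea) coordinates $x\leftrightarrow(y,t)$ with $y\in\partial\Omega$ and $t=t(x)$, for which $dx=(1+O(t))\,d\mathcal{H}^{2}(y)\,dt$ with the $O(t)$ controlled by a curvature bound of $\partial\Omega$. Since $r_n\to 0$, for $n$ large each $x$ with $t(x)<r_n$ sees $\Omega$ locally as the half-space bounded by the tangent plane at $y$, up to a volume error $O(r_n^4)$; concretely
\begin{equation*}
|B(x,r_n)\cap\Omega|=\pi r_n^3\, f(t/r_n)\,(1+O(r_n)),\qquad f(s)=\tfrac13\big(4-(1-s)^2(2+s)\big),
\end{equation*}
where $f$ increases on $[0,1]$ from $f(0)=\tfrac23$ to $f(1)=\tfrac43$ and has $f'(0)=1$. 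Substituting $s=t/r_n$ and applying Laplace's method to $\int_0^1 f(s)^k e^{-L_n f(s)}\,ds\sim(\tfrac23)^k e^{-\frac23 L_n}/L_n$, the boundary-layer contribution equals
\begin{equation*}
\frac{\mathrm{Area}(\partial\Omega)}{k!}\Big(\frac23\Big)^k\pi^{-1/3}\,n^{2/3}L_n^{k-2/3}e^{-\frac23 L_n}\,(1+o(1)).
\end{equation*}
The coefficient of $\log\log n$ in $L_n$ is precisely what makes $n^{2/3}L_n^{k-2/3}e^{-\frac23 L_n}\to e^{-2\xi/3}$, so the limit equals $\mathrm{Area}(\partial\Omega)\,(\tfrac23)^k\pi^{-1/3}e^{-2\xi/3}/k!=e^{-c}$ by \eqref{eq:Theorem-radius-1}. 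Assumption~I is used here to keep the error terms uniform: it forbids the integrand from being inflated near places where $\Omega$ is abnormally thin.

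\emph{From \eqref{eq:Theorem-formula-3} to the probabilities.} Let $W_{n,k}$ be the number of vertices of $G(\chi_n,r_n)$ of degree exactly $k$. By the Mecke/Palm formula for the Poisson process of intensity $n$, followed by standard de-Poissonisation, $\mathbb{E}[W_{n,k}]=\tfrac{n}{k!}\int_\Omega(n|B(x,r_n)\cap\Omega|)^k e^{-n|B(x,r_n)\cap\Omega|}\,dx\,(1+o(1))\to e^{-c}$. A Chen--Stein Poisson approximation---low-degree vertices being, with high probability, pairwise at distance $\gg r_n$---then gives $W_{n,k}\Rightarrow\mathrm{Poisson}(e^{-c})$, whence $\mathbb{P}(W_{n,k}=0)\to e^{-e^{-c}}$; and the same estimate as above shows the expected number of vertices of degree $j<k$ tends to $0$, so $\mathbb{P}(\rho(\chi_n;\delta\ge k+1)\le r_n)=\mathbb{P}(W_{n,k}=0)+o(1)\to e^{-e^{-c}}$. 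The remaining point is that $\rho(\chi_n;\kappa\ge k+1)=\rho(\chi_n;\delta\ge k+1)$ holds with probability tending to $1$, i.e. that at this radius scale the only obstructions to $(k+1)$-connectivity are the neighbourhoods of the vertices of degree at most $k$; this is the genuinely geometric ingredient and the main obstacle, and I would establish it by adapting the occupancy/covering arguments of \cite{Ding2022-3DRadii, Penrose-k-connectivity} to an arbitrary $\Omega$ satisfying Assumption~I---partitioning $\Omega$ into cells of side $\Theta(r_n)$ and excluding small connected components and small vertex cuts cell by cell, with boundary cells handled by the same half-space reduction used for the integral.
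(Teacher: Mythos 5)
Your proposal follows essentially the same route as the paper: the interior contribution is negligible, the boundary layer is reduced via tubular coordinates to a one-dimensional Laplace-type integral whose leading term comes from the half-ball value $\tfrac{2}{3}\pi r_n^3$ at $t=0$ (the paper's Lemma~\ref{lemma:a(t)} with $B=\tfrac23$, $D=\pi^{1/3}$), and the probabilistic statements follow from Palm theory, Chen--Stein Poisson approximation, de-Poissonisation, and Penrose's small-component/small-cut analysis for $\rho(\chi_n;\kappa\ge k+1)=\rho(\chi_n;\delta\ge k+1)$. The one step you elide is that the theorem only assumes $\partial\Omega$ has finite area, so the curvature bound underlying your tubular-neighbourhood coordinates is not available directly; the paper handles this by first proving the integral asymptotic for $C^2$ boundaries and then sandwiching a general $\Omega$ between inner and outer smooth approximants $\Omega_{\epsilon^-}\subset\Omega\subset\Omega_{\epsilon^+}$ with nearby boundary areas, and you would need to add that (standard) approximation argument.
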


In general, the \textbf{Assumption}~I does not hold for all regions in $\mathbb{R}^3$. However,
 as long as region $\Omega$ is convex, or the boundary $\partial\Omega$ is $C^2$ smooth, then $\Omega$ satisfies \textbf{Assumption}~I.
Theorem~\ref{thm:Main3D}  improves the results presented in~\cite{Ding2022-3DRadii}.

 This paper further demonstrates the asymptotic distribution of the critical radii for the  regions in $\mathbb{R}^d(d\geq3)$, which satisfy \textbf{Assumption}~I.

\begin{theorem} \label{thm:Main3D}
Suppose $\Omega\subset \mathbb{R}^d (d\geq 3) $  is a simply connected compact region with unit-volume,
satisfying the above~\textbf{Assumption}~I.
The boundary $\partial\Omega$ of the region has finite area $\mathrm{Area}(\partial\Omega)$, $k\geq 0$ is an integer and $c>0$ is a constant.
Let
 \begin{equation}\label{eq:Theorem-radius}
 r_n= \left( \frac{\log n+ \frac{dk-d+1}{d-1} \log\log n+\xi}{ \frac{d}{2(d-1)}V_d(1) n }\right)^{\frac1d},
\end{equation}
where $\xi$ solves
\begin{equation}\label{eq:Theorem-radius-1}
\mathrm{Area}(\partial\Omega)\frac{\left(\frac{d-1}{d}\right)^k\left(\frac{d}{2(d-1)}V_d(1)\right)^{\frac{d-1}{d}}}{e^{\frac{d-1}{d}\xi}{V_{d-1}(1)}k!}  =e^{-c}.
\end{equation}
Then
    \begin{equation}\label{eq:Theorem-formula-3}
       \lim_{n\rightarrow\infty} \frac n{k!}\int_{\Omega}\left(n|B(x,r_n)\cap\Omega|\right)^k e^{-n|B(x,r_n)\cap\Omega|}dx= e^{-c},
    \end{equation}
and therefore, the probabilities of the two events
$\rho(\chi_n;\delta\geq k+1)\leq r_n$ and $\rho(\chi_n;\kappa\geq
k+1)\leq r_n$ both converge to $\exp\left(-e^{-c}\right)$  as $n\rightarrow\infty$.
\end{theorem}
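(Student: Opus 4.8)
The plan is to establish the analytic limit \eqref{eq:Theorem-formula-3} and then invoke the standard machinery linking it to the two critical radii. That second step is dimension-free: writing $\lambda_n:=\frac n{k!}\int_{\Omega}\bigl(n|B(x,r_n)\cap\Omega|\bigr)^k e^{-n|B(x,r_n)\cap\Omega|}\,dx$ for the expected number of vertices of $G(\chi_n,r_n)$ of degree at most $k$ (after Poissonisation), a Chen--Stein argument shows that this count is asymptotically Poisson with mean $\lambda_n+o(1)$, whence $P(\rho(\chi_n;\delta\ge k+1)\le r_n)\to\exp(-\lim\lambda_n)$ after de-Poissonisation, and $P(\rho(\chi_n;\kappa\ge k+1)\le r_n)$ has the same limit because, near this radius, with high probability the only obstruction to $(k+1)$-connectivity of $G(\chi_n,r_n)$ is a vertex of degree at most $k$. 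This is exactly as in Theorems~\ref{thm:combined-Main} and \ref{thm:old-3} and in \cite{Penrose-k-connectivity,Penrose-RGG-book,Ding2022-3DRadii}, and needs no change for $d\ge3$, so everything reduces to proving $\lambda_n\to e^{-c}$.

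Write $u_n:=nV_d(1)r_n^d=\frac{2(d-1)}{d}\bigl(\log n+\frac{dk-d+1}{d-1}\log\log n+\xi\bigr)=\frac{2(d-1)}{d}\log n\,(1+o(1))$, with $\frac{2(d-1)}{d}\ge\frac43$ for $d\ge3$, and split the integral defining $\lambda_n$ over the deep interior $\Omega_{\mathrm{in}}:=\{x\in\Omega:\mathrm{dist}(x,\partial\Omega)\ge r_n\}$ and the boundary layer $T_n:=\Omega\setminus\Omega_{\mathrm{in}}$. On $\Omega_{\mathrm{in}}$ one has $|B(x,r_n)\cap\Omega|=V_d(1)r_n^d$ identically, so the interior part of $\lambda_n$ is at most $\frac n{k!}u_n^k e^{-u_n}$, which is $o(1)$ because $ne^{-u_n}$ decays like a negative power of $n$ while $u_n^k$ grows only polylogarithmically. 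This is precisely the structural feature making the limit depend on $\Omega$ only through $\mathrm{Area}(\partial\Omega)$, and the point where the analysis diverges from the planar Theorem~\ref{thm:combined-Main}, in which the interior still contributes for small $k$.

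For the boundary layer I would parametrise a point $x\in T_n$ by $t=\mathrm{dist}(x,\partial\Omega)\in[0,r_n)$ via the coarea formula, using that the $(d-1)$-dimensional measure of $\{x\in\Omega:\mathrm{dist}(x,\partial\Omega)=t\}$ tends to $\mathrm{Area}(\partial\Omega)$ as $t\to0^+$ (a Minkowski-content/tube estimate valid under the boundary regularity underlying \textbf{Assumption}~I, and reducing to Weyl's tube formula when $\partial\Omega$ is convex or $C^2$). Comparing $B(x,r_n)\cap\Omega$ with the tangent half-space gives $|B(x,r_n)\cap\Omega|=V_d(1)r_n^d-C_d(r_n,t)+\mathrm{error}$, with the spherical-cap volume $C_d(r,t)=V_{d-1}(1)\int_t^r(r^2-s^2)^{(d-1)/2}\,ds$, so that $C_d(r,0)=\tfrac12 V_d(1)r^d$ and $\partial_t C_d(r,0)=-V_{d-1}(1)r^{d-1}$; the error, coming from the curvature of $\partial\Omega$, from the level-set measure deviating from $\mathrm{Area}(\partial\Omega)$, and from balls meeting $\partial\Omega$ non-flatly, is shown to be negligible after integration, \textbf{Assumption}~I being used to keep $e^{-n|B(x,r_n)\cap\Omega|}\le e^{-c_0 u_n}$ and to control the exceptional regions. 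The mass concentrates on $t=O\bigl(1/(nV_{d-1}(1)r_n^{d-1})\bigr)\ll r_n$, where $n|B(x,r_n)\cap\Omega|=\tfrac{u_n}{2}+nV_{d-1}(1)r_n^{d-1}t+o(1)$; substituting and extending the $t$-integral to $\infty$ (the tail being negligible since $nV_{d-1}(1)r_n^d\asymp u_n\to\infty$) turns the boundary part of $\lambda_n$ into
\[
\mathrm{Area}(\partial\Omega)\,\frac n{k!}\int_0^\infty\Bigl(\tfrac{u_n}{2}+nV_{d-1}(1)r_n^{d-1}t\Bigr)^{k}e^{-u_n/2-nV_{d-1}(1)r_n^{d-1}t}\,dt\,(1+o(1)),
\]
which, because $u_n/2\to\infty$ and $\int_0^\infty(a+s)^k e^{-s}\,ds\sim a^k$, equals $\mathrm{Area}(\partial\Omega)\,\dfrac{(u_n/2)^{k}e^{-u_n/2}}{k!\,V_{d-1}(1)r_n^{d-1}}\,(1+o(1))$. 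Inserting $u_n=\frac{2(d-1)}{d}(\log n+\frac{dk-d+1}{d-1}\log\log n+\xi)$ and $r_n^{d-1}=(u_n/(nV_d(1)))^{(d-1)/d}$, all powers of $n$ and of $\log n$ cancel and this converges to $\mathrm{Area}(\partial\Omega)\,\dfrac{\bigl(\tfrac{d-1}{d}\bigr)^{k}\bigl(\tfrac{d}{2(d-1)}V_d(1)\bigr)^{(d-1)/d}}{e^{(d-1)\xi/d}\,V_{d-1}(1)\,k!}$, which is $e^{-c}$ by the relation \eqref{eq:Theorem-radius-1} defining $\xi$. Combined with the vanishing of the interior part, this gives $\lambda_n\to e^{-c}$.

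The algebraic bookkeeping in the last step is routine. The genuine obstacle is the boundary-layer analysis: making the half-space approximation and the coarea/tube estimate quantitative enough that the curvature corrections, the overlaps of collars of distant boundary points, and the deviation of the level-set measure from $\mathrm{Area}(\partial\Omega)$ all wash out using \textbf{Assumption}~I alone rather than smoothness of $\partial\Omega$. This is exactly the part of the three-dimensional argument of \cite{Ding2022-3DRadii} that has to be reworked; the higher-dimensional cap volume $C_d(r,t)$ and its expansion at $t=0$ replace the explicit $d=3$ formulas, but the overall scheme is unchanged.
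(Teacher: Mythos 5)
Your proposal is correct and follows essentially the same route as the paper: reduce to the analytic limit via Poissonization/de-Poissonization and the Penrose-style equivalence of $\delta$ and $\kappa$, show the deep interior contributes $o(1)$ for $d\ge3$, and extract the leading term from a boundary layer parametrised by distance to $\partial\Omega$ using the half-space/spherical-cap approximation. Your closed-form evaluation of the one-dimensional cap integral (substituting the linearisation of $n|B(x,r_n)\cap\Omega|$ in $t$ and extending to $\infty$) is just a different way of computing the same quantity the paper handles by integration by parts in its Lemma~\ref{lemma:a(t)}, and yields the identical constant $\frac{(f(0))^k e^{-f(0)}}{k!\,a'(0)}$.
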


It is clear to see that Theorem~\ref{thm:old-3} is consistent with Theorem~\ref{thm:Main3D} when $d=3$.

\par
We use the following notations throughout this article.
(1) Region $\Omega \subset \mathbb{R}^d$ is a unit-volume    region, and $B(x,r)\subset \mathbb{R}^d$ is a ball
centered at $x$ with  radius $r$.
(2) Notation $|A|$ is a short  for the volume of a measurable set $A\subset \mathbb{R}^d$ and $\|\cdot\|$ represents the length of a line segment. $\mathrm{Area}(\cdot)$ denotes the area of a hyper surface.
(3) $\mathrm{dist}(x, A)=\inf_{y\in A}\|xy\|$ where $x$ is a point and $A$ is a set.
(4) Given any two nonnegative functions $f(n)$ and $g(n)$, if there exist two constants $0<c_1<c_2$ such that
$c_1g(n)\leq f(n)\leq c_2g(n)$ for any sufficiently large $n$, then  denote $f(n)=\Theta(g(n))$.
We also use notations $f(n)=o(g(n))$ and $f(n)\sim g(n)$ to denote that $\lim\limits_{n\rightarrow\infty}\frac{f(n)}{g(n)}=0$ and $\lim\limits_{n\rightarrow\infty}\frac{f(n)}{g(n)}=1$, respectively.  (5). \(V_{d}(r)=\dfrac{\pi^{\tfrac{d}{2}}\,r^{d}}{\Gamma\bigl(\tfrac{d}{2}+1\bigr)}\) denotes the volume of a \(d\)-dimensional ball of radius \(r\); (6). A hyper surface is said to be $C^2$ smooth in this paper, meaning  that its function has continuous second derivatives.

\section{Proof outline of Theorem~\ref{thm:Main3D}}

Follow the proof framework presented in~\cite{Ding-RGG2018, Ding-RGG2025, Ding2022-3DRadii}, to prove Theorem~\ref{thm:Main3D}, it is sufficient to prove the following three propositions.

\begin{proposition}\label{pro:Explicit-Form}
Suppose unit-volume and compact region $\Omega\subset \mathbb{R}^d$
  has finite area   $\mathrm{Area}(\partial\Omega)$,
 $k\geq 0$ is an integer and $c>0$ is a constant.
If $r_n$ is given by (\ref{eq:Theorem-radius})-(\ref{eq:Theorem-radius-1}), then
\begin{equation}\label{eq:Explicit-Form}
       \lim_{n\rightarrow\infty} \frac n{k!}\int_{\Omega}\left(n|B(x,r_n)\cap\Omega|\right)^k e^{-n|B(x,r_n)\cap\Omega|}dx= e^{-c}.
\end{equation}
\end{proposition}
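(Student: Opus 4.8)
The plan is to show that the integral on the left of (\ref{eq:Explicit-Form}) concentrates on a thin shell around $\partial\Omega$, to reduce that part to a one–dimensional Laplace integral, and to check that the resulting constant equals $e^{-c}$ precisely because of the way $\xi$ was chosen. Throughout write $L_n:=\log n+\tfrac{dk-d+1}{d-1}\log\log n+\xi$, so that (\ref{eq:Theorem-radius}) is the statement $n r_n^{\,d}=aL_n$ with $a:=\tfrac{2(d-1)}{d\,V_d(1)}$. First split $\Omega=I_n\cup S_n$, where $I_n=\{x\in\Omega:\mathrm{dist}(x,\partial\Omega)\ge r_n\}$ and $S_n=\Omega\setminus I_n$. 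For $x\in I_n$ the ball $B(x,r_n)$ lies entirely inside $\Omega$ (any segment from $x$ of length $<r_n$ cannot reach $\partial\Omega$), so $|B(x,r_n)\cap\Omega|=V_d(r_n)=V_d(1)r_n^{\,d}$ and $n|B(x,r_n)\cap\Omega|=\tfrac{2(d-1)}{d}L_n$. Since $|I_n|\le|\Omega|=1$,
\[
\frac{n}{k!}\int_{I_n}\big(n|B(x,r_n)\cap\Omega|\big)^{k}e^{-n|B(x,r_n)\cap\Omega|}\,dx\;\le\;\frac{n}{k!}\Big(\tfrac{2(d-1)}{d}L_n\Big)^{k}e^{-\frac{2(d-1)}{d}L_n}=O\!\Big(n^{\frac{2-d}{d}}(\log n)^{M}\Big)
\]
for a fixed exponent $M$, which tends to $0$ since $d\ge3$. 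Hence only the shell $S_n$ contributes to the limit.

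On $S_n$ I would use the coarea formula, $\int_{S_n}f\,dx=\int_0^{r_n}\big(\int_{\{\mathrm{dist}(\cdot,\partial\Omega)=t\}}f\,d\mathcal H^{\,d-1}\big)\,dt$, together with $\mathcal H^{\,d-1}(\{\mathrm{dist}(\cdot,\partial\Omega)=t\})\to\mathrm{Area}(\partial\Omega)$ as $t\downarrow0$, and the geometric estimate that for a point $x$ with $t:=\mathrm{dist}(x,\partial\Omega)=r_ns$, $s\in[0,1)$,
\[
|B(x,r_n)\cap\Omega|=\big(1+o(1)\big)\,r_n^{\,d}\,\psi(s),\qquad \psi(s):=V_d(1)-V_{d-1}(1)\int_s^1(1-v^2)^{\frac{d-1}{2}}\,dv,
\]
uniformly over $S_n$ as $r_n\to0$, since near a boundary point $\Omega$ is well approximated by a half–space and $B(x,r_n)\cap\Omega$ by the spherical cap of the corresponding height; note that $\psi$ is strictly increasing on $[0,1]$ with $\psi(0)=V_d(1)/2$ and $\psi'(0)=V_{d-1}(1)$. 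Substituting $t=r_ns$ and recalling $n r_n^{\,d}=aL_n$ (so $a\psi(0)=\tfrac{d-1}{d}$), the shell contribution becomes
\[
\big(1+o(1)\big)\,\frac{n\,r_n\,\mathrm{Area}(\partial\Omega)}{k!}\int_0^1\big(aL_n\psi(s)\big)^{k}e^{-aL_n\psi(s)}\,ds .
\]
As $L_n\to\infty$ this is a Laplace integral concentrating at the minimiser $s=0$: replacing $\psi(s)$ by $\psi(0)+\psi'(0)s$ and rescaling $u=L_ns$ turns the $s$–integral into
\[
\big(1+o(1)\big)\Big(\tfrac{d-1}{d}L_n\Big)^{k}e^{-\frac{d-1}{d}L_n}\,\frac{1}{L_n}\int_0^{\infty}e^{-a\psi'(0)u}\,du=\big(1+o(1)\big)\Big(\tfrac{d-1}{d}L_n\Big)^{k}e^{-\frac{d-1}{d}L_n}\,\frac{d\,V_d(1)}{2(d-1)V_{d-1}(1)\,L_n}.
\]
Now $r_n=(aL_n/n)^{1/d}$, so $n\,r_n=a^{1/d}n^{(d-1)/d}L_n^{1/d}$ and $e^{-\frac{d-1}{d}L_n}=n^{-(d-1)/d}(\log n)^{-\frac{dk-d+1}{d}}e^{-\frac{d-1}{d}\xi}$; the powers of $n$ cancel because $1-\tfrac{d-1}{d}=\tfrac{1}{d}$, and the powers of $\log n$ cancel because $\tfrac{1}{d}+(k-1)=\tfrac{dk-d+1}{d}$. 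What remains is the constant
\[
\mathrm{Area}(\partial\Omega)\,\frac{\big(\tfrac{d-1}{d}\big)^{k}\big(\tfrac{d}{2(d-1)}V_d(1)\big)^{\frac{d-1}{d}}}{e^{\frac{d-1}{d}\xi}\,V_{d-1}(1)\,k!},
\]
which is exactly $e^{-c}$ by the defining relation (\ref{eq:Theorem-radius-1}) for $\xi$; hence the left side of (\ref{eq:Explicit-Form}) tends to $e^{-c}$.

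The main obstacle is the uniform cap estimate used in the second paragraph. Because $e^{-n|B(x,r_n)\cap\Omega|}$ turns an additive error $\varepsilon$ in $n|B(x,r_n)\cap\Omega|$ into a multiplicative factor $e^{\varepsilon}$, and $n|B(x,r_n)\cap\Omega|\asymp\log n$ on the part of the shell that carries the mass, one needs the relative error of $|B(x,r_n)\cap\Omega|$ to be $o(1/\log n)$ uniformly, not merely $o(1)$. When $\Omega$ is convex or $\partial\Omega$ is $C^2$, this is a routine curvature expansion: the correction to the half–space cap is $O(r_n)$ in relative size, hence $o(1/\log n)$. Under the bare ``finite area'' hypothesis one instead invokes that $\mathcal H^{\,d-1}$–almost every boundary point has a half–space approximate tangent (density $\tfrac{1}{2}$), splits $\partial\Omega$ into this good part and an exceptional part of arbitrarily small $\mathcal H^{\,d-1}$–measure, and passes to the limit via an Egorov / dominated–convergence argument; arranging the error to be small enough to survive multiplication by $n$ is the delicate point. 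Everything else is bookkeeping with the explicit choice of $r_n$.
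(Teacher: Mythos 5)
Your reduction is, in substance, the paper's: the interior $I_n$ is the paper's $\Omega(0)$ and is disposed of by the same $n^{(2-d)/d}$ computation; the shell integral, after the substitution $t=r_ns$, is exactly the quantity $n\int_0^{r/2}\frac{(na(t))^ke^{-na(t)}}{k!}\,dt$ that the paper isolates as its key lemma (your $r_n^{d}\psi(s)$ is the paper's cap volume $a(t)$, and $a\psi(0)=\tfrac{d-1}{d}$ is the paper's constant $B$). The only computational difference is that you evaluate this one-dimensional integral by linearising $\psi$ at $s=0$ and extending to a half-line, whereas the paper integrates by parts against $d\bigl(-e^{-f}\sum_{i\le k}f^i/i!\bigr)$ and shows the boundary term at $t=0$ dominates; both yield $\frac{B^k}{e^{B\xi}Dk!}$, and your constant bookkeeping checks out against (\ref{eq:Theorem-radius-1}).

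The genuine divergence, and the one real gap, is your last paragraph. You correctly identify that the half-space cap approximation must hold with relative error $o(1/\log n)$ uniformly over the shell, and that this is automatic when $\partial\Omega$ is $C^2$ (curvature correction of relative size $O(r_n)$). But under the bare hypothesis of finite boundary area, your proposed route --- approximate tangent planes $\mathcal{H}^{d-1}$-a.e.\ plus an Egorov argument --- is only a plan, and it is harder than necessary: an exceptional boundary set of small $\mathcal{H}^{d-1}$-measure must still be shown to contribute $o(1)$ after multiplication by $n$, and density-$\tfrac12$ blow-up alone does not give the quantitative $o(1/\log n)$ control at the fixed scale $r_n$ that your Laplace step requires. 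The paper closes this case differently, by a sandwich: it picks smooth $\Omega_{\epsilon^-}\subset\Omega\subset\Omega_{\epsilon^+}$ with volumes and boundary areas within $\epsilon$ of those of $\Omega$, applies the smooth case to each, and squeezes. If you want a complete proof under the stated hypotheses, that approximation step (or an equivalent) is what you still need to supply; note also that the coarea limit $\mathcal{H}^{d-1}\bigl(\{\mathrm{dist}(\cdot,\partial\Omega)=t\}\bigr)\to\mathrm{Area}(\partial\Omega)$ you invoke is itself a regularity assertion (Minkowski content equals surface area) that fails for general sets and must be justified together with the rest.
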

We should point out that this proposition does not need the~\textbf{Assumption}~I condition on $\Omega$.

\begin{proposition}\label{pro:conclusion-1}   Under the assumptions of Theorem~\ref{thm:Main3D},
\begin{equation}\label{eq:conclusion-1}
\lim_{n\rightarrow\infty}\Pr\left\{\rho(\chi_n;\delta\geq k+1)\leq r_n\right\} =e^{-e^{-c}}.
\end{equation}
\end{proposition}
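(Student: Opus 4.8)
The event $\{\rho(\chi_n;\delta\geq k+1)\leq r_n\}$ is precisely the event that $G(\chi_n,r_n)$ has no vertex of degree at most $k$, so, setting $W_n=\#\{i\leq n:\deg_{G(\chi_n,r_n)}(X_i)\leq k\}$, we have $\{\rho(\chi_n;\delta\geq k+1)\leq r_n\}=\{W_n=0\}$. The plan is to prove the stronger statement that $W_n$ converges in distribution to a Poisson random variable of mean $e^{-c}$, whence $\Pr\{W_n=0\}\to e^{-e^{-c}}$. Following~\cite{Ding-RGG2018,Ding-RGG2025,Ding2022-3DRadii}, it is convenient first to replace $\chi_n$ by a Poisson process $\mathcal P_n$ of intensity $n$ on $\Omega$, establish the Poisson limit for the corresponding count $\bar W_n=\sum_{x\in\mathcal P_n}\mathbf 1\{\deg(x)\leq k\}$, and then de-Poissonize by the standard coupling.

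For the first moment, the Mecke formula gives
$$\mathbb E[\bar W_n]=n\int_\Omega\Pr\{\mathrm{Poisson}(n|B(x,r_n)\cap\Omega|)\leq k\}\,dx=n\int_\Omega e^{-n|B(x,r_n)\cap\Omega|}\sum_{j=0}^k\frac{(n|B(x,r_n)\cap\Omega|)^j}{j!}\,dx .$$
Since \textbf{Assumption}~I yields $|B(x,r_n)\cap\Omega|\geq\theta V_d(r_n)$ for a fixed $\theta>0$ and all $x$, the quantity $n|B(x,r_n)\cap\Omega|\to\infty$ uniformly in $x$, so the $j=k$ term dominates the inner sum and $\mathbb E[\bar W_n]\sim\frac n{k!}\int_\Omega(n|B(x,r_n)\cap\Omega|)^k e^{-n|B(x,r_n)\cap\Omega|}\,dx\to e^{-c}$ by Proposition~\ref{pro:Explicit-Form}; the same limit for the binomial count $\mathbb E[W_n]$ follows from the usual comparison. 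For the Poisson approximation apply the Chen--Stein method to $\bar W_n$, taking the dependency neighbourhood of a point $x$ to be $B(x,2r_n)\cap\Omega$; since $\mathbf 1\{\deg(x)\leq k\}$ is measurable with respect to $\mathcal P_n\cap B(x,r_n)$, the long-range Chen--Stein term vanishes identically, and it remains to show that the two local terms tend to $0$, namely $b_1=\int_\Omega\int_{B(x,2r_n)\cap\Omega}n^2 p(x)p(y)\,dy\,dx$ and $b_2=\int_\Omega\int_{B(x,2r_n)\cap\Omega}n^2\Pr\{\deg(x)\leq k,\ \deg(y)\leq k\}\,dy\,dx$, where $p(x)=\Pr\{\mathrm{Poisson}(n|B(x,r_n)\cap\Omega|)\leq k\}$. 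The bound $b_1\leq\bigl(\sup_x n\int_{B(x,2r_n)\cap\Omega}p(y)\,dy\bigr)\mathbb E[\bar W_n]=o(1)$ is immediate because $nV_d(2r_n)=\Theta(\log n)$ while $\sup_y p(y)=O(n^{-\theta'}(\log n)^{O(1)})$ for some $\theta'>0$.

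The term $b_2$, essentially the expected number of ordered pairs of points of $\mathcal P_n$ within distance $2r_n$ that are both of degree $\leq k$, is the heart of the proof. For such a pair $x,y$, write $S_x=B(x,r_n)\cap\Omega$ and $S_y=B(y,r_n)\cap\Omega$; independence of the Poisson counts on the disjoint pieces $S_x\setminus S_y$, $S_x\cap S_y$, $S_y\setminus S_x$ gives $\Pr\{\deg(x)\leq k,\ \deg(y)\leq k\}\leq\Pr\{\deg(x)\leq k\}\cdot\Pr\{\mathcal P_n(S_y\setminus S_x)\leq k\}$, and symmetrically with $x,y$ interchanged. The decisive geometric input is the two-point estimate $|S_x\setminus S_y|+|S_y\setminus S_x|=|(B(x,r_n)\triangle B(y,r_n))\cap\Omega|\geq c'\,\|x-y\|\,r_n^{d-1}$, uniform over $x,y$ near $\partial\Omega$ with $\|x-y\|\leq 2r_n$; granting it, the larger of the two "lune" volumes is $\geq\tfrac{c'}{2}\|x-y\|r_n^{d-1}$, and, these volumes being $O((\log n)/n)$, one gets $\Pr\{\deg(x)\leq k,\ \deg(y)\leq k\}\leq C\Pr\{\deg(x)\leq k\}\cdot\min(1,t^k e^{-t})$ with $t=c' n\|x-y\|r_n^{d-1}$. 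Integrating $\min(1,t^k e^{-t})$ over $y\in B(x,2r_n)$ in polar coordinates and substituting $t=c' n r_n^{d-1}\|y-x\|$ reduces the inner integral to $(c' n r_n^{d-1})^{-d}\int_0^{\Theta(\log n)}\min(1,t^k e^{-t})t^{d-1}\,dt=O\bigl((nr_n^{d-1})^{-d}\bigr)=O\bigl(n^{-1}(\log n)^{-(d-1)}\bigr)$, since $\int_0^\infty\min(1,t^k e^{-t})t^{d-1}\,dt<\infty$ and $r_n^d=\Theta((\log n)/n)$. Hence $b_2\leq Cn^2\bigl(\int_\Omega\Pr\{\deg(x)\leq k\}\,dx\bigr)\cdot O\bigl(n^{-1}(\log n)^{-(d-1)}\bigr)=O\bigl((\log n)^{-(d-1)}\bigr)=o(1)$, using $n\int_\Omega\Pr\{\deg(x)\leq k\}\,dx=\mathbb E[\bar W_n]=O(1)$.

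With $b_1,b_2=o(1)$ and the long-range term $0$, the Chen--Stein bound gives $d_{\mathrm{TV}}\bigl(\bar W_n,\mathrm{Poisson}(\mathbb E[\bar W_n])\bigr)\to 0$, which together with $\mathbb E[\bar W_n]\to e^{-c}$ yields $\Pr\{\bar W_n=0\}\to e^{-e^{-c}}$; de-Poissonization then transfers this to $\Pr\{\rho(\chi_n;\delta\geq k+1)\leq r_n\}=\Pr\{W_n=0\}\to e^{-e^{-c}}$, which is (\ref{eq:conclusion-1}). The main obstacle is exactly the two-point estimate $|(B(x,r_n)\triangle B(y,r_n))\cap\Omega|\gtrsim\|x-y\|r_n^{d-1}$: it requires describing $\Omega$ on the scale $r_n$ near $\partial\Omega$ — essentially that the boundary is, to leading order, a hyperplane with $\Omega$ occupying one side — and is where \textbf{Assumption}~I, together with the finiteness of $\mathrm{Area}(\partial\Omega)$, is genuinely used; this is also the step whose analogues in $\mathbb R^2$ and $\mathbb R^3$ demanded the most work in~\cite{Ding-RGG2018,Ding2022-3DRadii}. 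The remaining ingredients — the first-moment asymptotics supplied by Proposition~\ref{pro:Explicit-Form}, the binomial--Poisson and de-Poissonization passages, and the bound on $b_1$ — are routine.
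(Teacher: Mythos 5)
Your overall strategy coincides with the paper's: Poissonize, establish a Poisson limit for the count of vertices of degree at most $k$ via a Chen--Stein / total-variation bound (the paper invokes Theorem~6.7 of \cite{Penrose-RGG-book}, stated here as Lemma~\ref{lem:Poisson-approximation}, whose terms $J_1,J_2$ are exactly your $b_1,b_2$), feed in the first-moment asymptotics from Proposition~\ref{pro:Explicit-Form} via Palm theory, and de-Poissonize. Your treatment of the first moment and of $b_1$ matches the paper and is fine.

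The genuine gap is the two-point lune estimate, which you correctly identify as the heart of the matter but then only assert. You claim $|(B(x,r_n)\triangle B(y,r_n))\cap\Omega|\geq c'\,\|x-y\|\,r_n^{d-1}$ uniformly near $\partial\Omega$. The paper does not prove this; its geometric input (Lemma~\ref{lem:shadow-low-bound}) is the weaker free-space bound $V^{*}(L)\gtrsim L^{3}r^{d-3}\geq L^{d}$ for the half-ball lune $B_{\mathrm{semi}}(x,r)\setminus B(y,r)$, which is then combined with \textbf{Assumption}~I inside Proposition~\ref{proposition1}. This difference is not cosmetic: if you substitute the provable bound $t=c\,nL^{d}$ into your own polar-coordinate computation, the inner integral becomes $\frac{1}{cdn}\int_{0}^{cn(2r_n)^d}\min(1,t^{k}e^{-t})\,dt=O(n^{-1})$ and your chain of inequalities yields only $b_2=O(1)$, not $o(1)$. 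The paper's Proposition~\ref{proposition1} compensates with an additional prefactor of order $(nr_n^{d})^{-1}=\Theta(1/\log n)$ arising from a more careful decomposition of the pair probability, which is precisely what turns the bound into $o(1)$. So as written your proof has a hole that cannot be patched by simply citing the paper's lemma: you must either genuinely establish the stronger estimate $\gtrsim\|x-y\|r_n^{d-1}$ for the lune intersected with $\Omega$ --- which is delicate under \textbf{Assumption}~I alone, since that hypothesis gives only a density lower bound and does not guarantee the local-hyperplane picture of $\partial\Omega$ you invoke --- or reorganize the $b_2$ bound to extract the extra logarithmic saving as the paper does.
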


\begin{proposition}\label{pro:conclusion-2}  Under the assumptions of Theorem~\ref{thm:Main3D},

\begin{equation}\label{eq:conclusion-2}
\lim_{n\rightarrow\infty}\Pr\left\{\rho(\chi_n;\delta\geq k+1)=\rho(\chi_n;\kappa\geq
k+1)\right\}=1.
\end{equation}
\end{proposition}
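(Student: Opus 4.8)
The plan is to follow the classical route. The inequality $\rho(\chi_n;\kappa\ge k+1)\ge\rho(\chi_n;\delta\ge k+1)$ holds deterministically — a $(k+1)$-connected graph on at least $k+2$ vertices has minimum degree $\ge k+1$, and both quantities are thresholds of properties monotone in $r_n$ — so it suffices to show $\Pr\{\rho(\chi_n;\kappa\ge k+1)>\rho(\chi_n;\delta\ge k+1)\}\to 0$; equivalently, writing $r_\star=\rho(\chi_n;\delta\ge k+1)$, that $G(\chi_n,r_\star)$ is $(k+1)$-connected with probability tending to $1$. Fix a large constant $M$. By Proposition~\ref{pro:conclusion-1} applied with $c=M$, the radius $r_\star$ does not exceed the value $r_n(M)$ given by (\ref{eq:Theorem-radius}) with $\xi$ determined from $c=M$ via (\ref{eq:Theorem-radius-1}), except on an event of probability tending to $1-e^{-e^{-M}}$, which is as small as we wish. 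Since $G(\chi_n,r)$ changes only at the finitely many inter-point distances, it is then enough to prove that the expected number of \emph{bad witnesses} — triples $(r,T,C)$ with $r\le r_n(M)$ an inter-point distance, $\delta(G(\chi_n,r))\ge k+1$ (which by itself forces $r\ge r_\star$), $T\subset\chi_n$ with $|T|\le k$, and $C$ a connected component of $G(\chi_n,r)-T$ with $2\le|C|\le n/2$ — tends to $0$. The dependence of $r_\star$ on $\chi_n$ is disposed of in the usual way, by recording that $r_\star$ is the distance of the critical pair (a vertex with exactly $k$ neighbours within $r_\star$) and invoking the Palm--Mecke formula for the Poissonised process together with de-Poissonisation, exactly as in~\cite{Ding-RGG2018,Ding-RGG2025,Ding2022-3DRadii}.

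Two structural facts drive the count. First, $\delta(G(\chi_n,r))\ge k+1$ together with $|T|\le k$ forces $|C|\ge 2$: every vertex of $C$ has at least $k+1$ neighbours and at most $k$ of them in $T$, hence at least one in $C$ — and it is precisely here that the minimum-degree hypothesis is indispensable, since it is exactly the case $|C|=1$, a vertex of degree $\le k$, which genuinely occurs near this radius, that it rules out. Second, $G[C]$ is connected, so $C$ is geometrically connected, and the region $U=\bigcup_{v\in C}B(v,r)$ contains no point of $\chi_n$ outside $C\cup T$; thus a witness carries the exponential suppression $e^{-n|U\cap\Omega|}$, with $|U\cap\Omega|\ge\tfrac12 V_d(r)(1+o(1))$ for $C$ near $\partial\Omega$ and $|U\cap\Omega|=|U|\ge V_d(r)$ in the interior. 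I would split the count according to $|C|$.

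For $|C|$ in the range $[K,n/2]$ with $K$ a large constant (or a slowly growing cutoff), a separator of size $\le k$ can cut off a component of this order only if $U$ — a union of at least $K$ balls of radius $r$ — contains only $O(K)$ points of $\chi_n$ although its expected content is of order $K\log n$; tessellating $\Omega$ into cells of diameter comparable to $r$ shows this has super-polynomially small probability for every $r$ in the window, indeed for all $r$ well below the connectivity threshold. This step coincides with the corresponding estimate in the cited works and is not the difficulty.

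The crux is the small range $2\le|C|\le K$. For fixed $j=|C|$ and $k'=|T|$, a union bound over the $O(n^{\,j+k'})$ choices of the points of $C\cup T$ reduces matters to estimating $n^{\,j+k'}\!\int\!\cdots\!\int e^{-n|U\cap\Omega|}$, the integration being over configurations in which $C$ is held together by a spanning tree of $\le j-1$ edges of length $\le r$ and the $k'$ points of $T$ sit in the relevant ball intersections. The dominant contribution comes from clusters $C$ within $O(r)$ of $\partial\Omega$, where $|B(x,r)\cap\Omega|\approx\tfrac12 V_d(r)$ for $x$ on a locally flat boundary piece. A crude treatment of the boundary layer (taking its width to be of order $r$) is too lossy — it fails already for $j\ge k+2$, even using $\delta\ge k+1$ — so one must evaluate the integral over the depth of the cluster below $\partial\Omega$ by Laplace's method, using that $|B(x,r)\cap\Omega|$ grows linearly in that depth with slope of order $V_{d-1}(1)r^{d-1}$. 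This gains a factor of order $(nr_n^{d})^{-1}=\Theta(1/\log n)$ per cluster, and, after expanding the lens volumes $|B(u,r)\cap B(v,r)\cap\Omega|$ and integrating over the inter-point separations, a further factor of order $(\log n)^{-d}$ from the resulting Gamma-type integral. Substituting the explicit $r_n$ of (\ref{eq:Theorem-radius})--(\ref{eq:Theorem-radius-1}) and bookkeeping every polylogarithmic factor, the contribution of size-$j$ small components comes out as $O\!\big((\log n)^{-(j-1)(d-1)+O_k(1)}\big)$; since $d\ge 3$ the exponent is at most $1-d\le -2$ for every $j\ge 2$, so this tends to $0$, and summing over $2\le j\le K$ and $0\le k'\le k$ keeps it $o(1)$. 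This is the main technical obstacle, and it is exactly where $d\ge 3$ and the precise constants in $r_n$ are used: for $d=2$ the $j=2$ term is only borderline and a finer argument would be needed.
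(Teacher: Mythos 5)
Your overall route is the same as the paper's: the paper proves this proposition by importing Penrose's Propositions~5.1 and~5.2 (the events $E_n(K)$ and $F_n(K)$), checking that \textbf{Assumption}~I keeps $|B(x,r)\cap\Omega|$ comparable to $|B(x,r)|$ so those arguments survive on a general region, and finishing with the standard squeezing argument. Your ``small component'' count is the content of $E_n(K)$ (Penrose's Lemma~5.2, where the gain of one factor $(nr_n^d)^{-(d-1)}=\Theta((\log n)^{-(d-1)})$ per extra cluster point comes from the lune volume $\Theta(Lr^{d-1})$ — exactly what Lemma~\ref{lem:shadow-low-bound} is set up to provide), and your ``large component'' count is $F_n(K)$. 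So there is no divergence of method to report; the issue is a concrete gap in your case split.

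You partition the bad components by \emph{cardinality}: $2\le|C|\le K$ handled by the union bound plus Laplace's method, $|C|\in[K,n/2]$ handled by the claim that $U=\bigcup_{v\in C}B(v,r)$ is ``a union of at least $K$ balls'' with ``expected content of order $K\log n$.'' That volume claim is false when the balls overlap: a component with $|C|=K$ points all within distance $r/10$ of one another has $|U|=\Theta(r^d)$, not $\Theta(Kr^d)$, so the expected content of $U$ is only $\Theta(\log n)$ and finding $K+O(k)$ points there is not an unlikely under-concentration event at all. Such ``many points, small diameter'' components are simultaneously excluded from your small-$|C|$ case (capped at $K$) and not controlled by your large-$|C|$ argument, so they fall through the cracks. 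The repair is Penrose's: split by \emph{diameter} rather than cardinality. The small-diameter event ($\mathrm{diam}\,U\le Kr_n$ with at most $k$ points of $\chi_n$ in $U_{r_n}\setminus U$) is allowed to contain clusters of arbitrary cardinality $m$; the extra $m-2$ points cost only a factor of order $(nV_d(L))^{m-2}/(m-2)!$ with $L=\mathrm{diam}\,U$, which, after integrating $L$ against the emptiness factor $e^{-cnLr_n^{d-1}}$, remains summable over $m$ and is still $o(1)$ relative to $\sup_x\psi^k_{n,r_n}(x)$. Only genuinely large-diameter components are then sent to the tessellation/Peierls estimate of Proposition~\ref{proposition-appendix-2}. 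Separately, your final exponent ``$-(j-1)(d-1)+O_k(1)$'' only yields $o(1)$ if the $O_k(1)$ term is shown to be smaller than $d-1$ for $j=2$; a careful bookkeeping (or the ratio-to-$\psi^k_{n,r_n}(x)$ comparison of Penrose's Lemma~5.2, which makes the polylogarithmic factors cancel by construction) shows it is in fact $0$, so this part is right but needs to be carried out rather than asserted — and, for what it is worth, the $d=2$, $j=2$ case is then $O((\log n)^{-1})$, not borderline.
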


Throughout this article, we define
\begin{equation}\label{eq:psi-function}
       \psi^k_{n,r}(x)= \frac{\left(n|B(x,r)\cap\Omega|\right)^k
       e^{-n|B(x,r)\cap\Omega|}}{k!},
\end{equation}
indicating the probability of that the node at $x$ has $k$ degree, for a Poisson point process

\section{Proof sketch of Proposition~\ref{pro:Explicit-Form}}\label{sec:Pro-1}

\subsection{A important lemma}

For any $t\in[0,r]$, we define
\begin{equation}\label{eq:a(r,t)}
  a(r,t)=|\{x=(x_1,x_2,\cdots,x_d):x_1^2+x_2^2+\cdots+x_d^2\leq r^2, x_1\leq t\}|,
\end{equation}
the volume of a major spherical segment.
 $a(r,t)$  is usually shortly denoted by $a(t)$, if there is   confusion about $r$.
This volume can be expressed as:
$$
a(t)
=V_{d-1}(1)\,r^d\int_{-1}^{\frac{t}{r}}\bigl(1-u^2\bigr)^{\frac{d-1}{2}}\,\mathrm{d}u
$$
where:
\begin{itemize}
	\item \(V_{d-1}(1)=\dfrac{\pi^{\tfrac{d-1}{2}}}{\Gamma\bigl(\tfrac{d+1}{2}\bigr)}\) is the volume of the unit \((d-1)\)-dimensional ball;
	\item \(\Gamma(\cdot)\) is the Gamma function.
\end{itemize}

 \begin{lemma}\label{lemma:a(t)}
 Let $r=r_n=\left( \frac{\log n+ \frac{dk-d+1}{d-1} \log\log n+\xi}{ \frac{d}{2(d-1)}V_d(1) n }\right)^{\frac1d}$ and $k\geq 0$, then
$$
    n\int_0^{\frac r2}\frac{(na(t))^ke^{-na(t)}}{k!}dt\sim
   \frac{\left(\frac{d-1}{d}\right)^k\left(\frac{d}{2(d-1)}V_d(1)\right)^{\frac{d-1}{d}}}{e^{\frac{d-1}{d}\xi}{V_{d-1}(1)}k!}, \quad r\rightarrow 0.
$$
\end{lemma}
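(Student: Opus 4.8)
The plan is to localize the integral near its peak at $t=0$ and reduce it to an elementary exponential integral. First I would record two exact facts about $a(t)=a(r_n,t)$: its endpoint value $a(0)=\tfrac12V_d(1)r_n^d$ (at $t=0$ the segment is exactly a half-ball), and the derivative $a'(t)=V_{d-1}(1)(r_n^2-t^2)^{(d-1)/2}$, which is positive and satisfies $a'(s)=V_{d-1}(1)r_n^{d-1}\bigl(1+O((s/r_n)^2)\bigr)$. Substituting $r_n$ into $a(0)$ gives the closed form
\begin{equation*}
na(0)=\tfrac{d-1}{d}\log n+\tfrac{dk-d+1}{d}\log\log n+\tfrac{d-1}{d}\xi,
\end{equation*}
so $na(0)\to\infty$; in particular $na(0)>k$ eventually, hence $x\mapsto x^ke^{-x}$ is decreasing on $[na(0),\infty)$.

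Next I would split the integral at an intermediate scale $\varepsilon_n:=r_n/\log\log n$. Writing $\beta_n:=nV_{d-1}(1)r_n^{d-1}$, the key elementary estimate is $\beta_nr_n=nV_{d-1}(1)r_n^d=\Theta(\log n)\to\infty$, which makes $\varepsilon_n$ both negligible relative to $r_n$ (so $\varepsilon_n/r_n\to0$) and long on the scale $1/\beta_n$ (so $\beta_n\varepsilon_n\to\infty$, and also $\beta_n\varepsilon_n=o(na(0))$). On $[0,\varepsilon_n]$ integration of $a'$ gives $a(t)-a(0)=V_{d-1}(1)r_n^{d-1}t\,(1+\eta_n(t))$ with $\sup_{t\le\varepsilon_n}|\eta_n(t)|\to0$, hence $na(t)=na(0)+\beta_nt(1+\eta_n(t))$ and, because $\beta_n\varepsilon_n=o(na(0))$, also $(na(t))^k=(na(0))^k(1+o(1))$ uniformly on $[0,\varepsilon_n]$. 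Therefore
\begin{equation*}
n\int_0^{\varepsilon_n}\frac{(na(t))^ke^{-na(t)}}{k!}\,dt\sim\frac{n(na(0))^ke^{-na(0)}}{k!}\int_0^{\varepsilon_n}e^{-\beta_nt(1+\eta_n(t))}\,dt\sim\frac{n(na(0))^ke^{-na(0)}}{k!\,\beta_n},
\end{equation*}
the last step by sandwiching $\int_0^{\varepsilon_n}e^{-\beta_nt(1+\eta_n(t))}\,dt$ between $\int_0^\infty e^{-\beta_nt(1-\sup|\eta_n|)}\,dt$ and $\int_0^{\varepsilon_n}e^{-\beta_nt(1+\sup|\eta_n|)}\,dt$, both $\sim1/\beta_n$ since $\beta_n\varepsilon_n\to\infty$.

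For the tail on $[\varepsilon_n,r_n/2]$ I would use that $na(t)\ge na(\varepsilon_n)\ge k$ together with the monotonicity of $x^ke^{-x}$ to bound the integrand by $(na(\varepsilon_n))^ke^{-na(\varepsilon_n)}/k!$, so the tail is at most $\tfrac{r_n}{2}\cdot n(na(\varepsilon_n))^ke^{-na(\varepsilon_n)}/k!$; dividing by the main term and using $na(\varepsilon_n)-na(0)=\beta_n\varepsilon_n(1+o(1))$ and $(na(\varepsilon_n))^k\sim(na(0))^k$ leaves a factor $O(\beta_nr_n)\,e^{-\beta_n\varepsilon_n(1+o(1))}=O(\log n)\,e^{-\Theta(\log n/\log\log n)}\to0$. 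Hence the tail is negligible and the whole integral equals $\dfrac{n(na(0))^ke^{-na(0)}}{k!\,\beta_n}(1+o(1))$. It remains to substitute the closed forms of $na(0)$, $\beta_n$ and $r_n$ and simplify: the powers of $n$ cancel, and the powers of $\log n$ cancel against $M_n^{k-(d-1)/d}$ where $M_n:=\log n+\tfrac{dk-d+1}{d-1}\log\log n+\xi$, so what remains is exactly $\dfrac{\left(\frac{d-1}{d}\right)^k\left(\frac{d}{2(d-1)}V_d(1)\right)^{(d-1)/d}}{e^{\frac{d-1}{d}\xi}V_{d-1}(1)\,k!}$. The one genuine subtlety is arranging the linearization $a(t)-a(0)\approx V_{d-1}(1)r_n^{d-1}t$ to be uniform on an interval long enough ($\beta_n\varepsilon_n\to\infty$) to recover the full exponential integral while still keeping $\varepsilon_n=o(r_n)$; this is precisely what $\beta_nr_n\to\infty$ provides, and the remaining steps are routine asymptotic bookkeeping.
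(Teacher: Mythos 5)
Your proposal is correct, and it reaches the right leading term $\dfrac{n\,(na(0))^k e^{-na(0)}}{k!\,\beta_n}$ with $\beta_n=nV_{d-1}(1)r_n^{d-1}=na'(0)$, which after substitution of $na(0)=\tfrac{d-1}{d}M_n$ and $r_n$ indeed simplifies to the stated constant (the exponent bookkeeping $k-\tfrac{d-1}{d}=\tfrac{dk-d+1}{d}$ checks out). However, your route is genuinely different from the paper's. The paper integrates by parts over the whole interval $[0,r/2]$ using the exact antiderivative $-e^{-f}\sum_{i=0}^k f^i/i!$ of $f^ke^{-f}f'/k!$: the boundary evaluation at $t=0$ produces exactly the term $\tfrac{1}{a'(0)}e^{-f(0)}\tfrac{(f(0))^k}{k!}$ that you obtain, the boundary term at $t=r/2$ is shown to vanish because $A>B$ (the half-radius cap has strictly larger volume fraction than the half-ball), and the leftover integral involving $a''/(a')^2$ is bounded via $|a''/(a')^3|=O(r^{1-2d})$ and shown to be $o(1)$. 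Your argument instead localizes to $[0,\varepsilon_n]$ with $\varepsilon_n=r_n/\log\log n$, linearizes the exponent uniformly there, computes an explicit exponential integral, and kills the tail by monotonicity of $x^ke^{-x}$ beyond $x=k$. The trade-off: the paper's integration by parts avoids choosing an intermediate scale and handles the full interval in one stroke via exact antiderivatives, but requires controlling the second-derivative correction term; your Laplace-type localization is more self-contained (only first-derivative information on a short interval) at the cost of the sandwich/tail estimates, and it makes transparent why the answer is $f(0)^ke^{-f(0)}/(k!\,a'(0))$ --- the integrand is essentially a decaying exponential with rate $na'(0)$ started at its left endpoint. Both are complete and correct.
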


\begin{proof}
	For convenience, we denote $C_k=\frac{dk-d+1}{d-1}$ and $c_d = \frac{d}{2(d-1)}V_d(1)$. Notice that
	$$
	a'(t)
	=\frac{\mathrm{d}}{\mathrm{d}t}\Bigl[V_{d-1}(1)\,r^d
	\int_{-1}^{t/r}(1-u^2)^{\frac{d-1}2}\,\mathrm{d}u\Bigr]
	\;=\;V_{d-1}(1)\,r^{\,d-1}\Bigl(1-\frac{t^2}{r^2}\Bigr)^{\frac{d-1}2},
	$$
	$$
	a''(t)
	= - (d-1)\,V_{d-1}(1)\; t\; r^{d-3}\left(1 - \frac{t^2}{r^2}\right)^{\frac{d-3}{2}}.
	$$
	
	Let $f(t) = na(t)$, then
	
	\begin{eqnarray*}
		f(0) &=& nV_{d-1}(1)  r^d \int_{-1}^{0} (1 - u^2)^{\frac{d-1}{2}} du \\
		&=& \frac{V_{d-1}(1)\int_{-1}^{0} (1 - u^2)^{\frac{d-1}{2}}du}{c_d} \left( \log n+ C_k \log\log n+\xi\right)\\
		&=& B\left( \log n+ C_k \log\log n+\xi\right), \\
		f\left(\frac r2\right)
		&=&nV_{d-1}(1)  r^d \int_{-1}^{\frac{1}{2}} (1 - u^2)^{\frac{d-1}{2}} du\\
		&=&\frac{V_{d-1}(1)\int_{-1}^{\frac{1}{2}} (1 - u^2)^{\frac{d-1}{2}}du}{c_d} \left( \log n+ C_k \log\log n+\xi\right) \\
		&=& A\left( \log n+ C_k \log\log n+\xi\right), 	\\
		a'(0)
		&=&  V_{d-1}(1)  r^{d-1} \\
		&=& \frac{V_{d-1}(1)}{c_d^{1-\frac{1}{d}}}n^{\frac{1}{d}-1}\left(\log n+ C_k \log\log n+\xi\right)^{1-\frac{1}{d}} \\
		&=& Dn^{\frac{1}{d}-1}\left(\log n+ C_k \log\log n+\xi\right)^{1-\frac{1}{d}},\\
		a'(\frac{r}{2}) &
		=& \left(\frac{3}{4}\right)^{\frac{d-1}{2}}V_{d-1}(1)  r^{d-1} \\
		&=& \left(\frac{3}{4}\right)^{\frac{d-1}{2}}\frac{V_{d-1}(1)}{c_d^{1-\frac{1}{d}}}n^{\frac{1}{d}-1}\left(\log n+ C_k \log\log n+\xi\right)^{1-\frac{1}{d}} \\
		&=& E n^{\frac{1}{d}-1}\left(\log n+ C_k \log\log n+\xi\right)^{1-\frac{1}{d}},
	\end{eqnarray*}

	Here \(A\), \(B\), \(D\), and \(E\) are fixed constants.  Moreover, since
	$$
	(1 - u^2)^{\frac{d-1}{2}}\ge0
	\quad\text{for }u\in[-1,1],
	$$
	the integral defining \(A\) (over \([-1,\tfrac12]\)) is strictly larger than that defining \(B\) (over \([-1,0]\)), and hence $A>B$.

	Starting from
	$$
	B \;=\;\frac{V_{d-1}(1)\,\displaystyle\int_{-1}^{0}(1 - u^2)^{\frac{d-1}{2}}\,du}{c_d},
	$$
	and using the facts
	$$
	V_{d-1}(1)\,\int_{-1}^{0}(1 - u^2)^{\frac{d-1}{2}}\,du
	=\frac{1}{2}\,V_d(1),
	\quad
	c_d=\frac{d}{2(d-1)}\,V_d(1),
	$$
	we immediately obtain $B = \frac{d-1}{d}$.
	
	\begin{equation*}
		\begin{aligned}
			n\int_{0}^{\frac r2} \frac{(f(t))^ke^{-f(t)}}{k!}dt=&n\int_{0}^{\frac r2} \frac{(f(t))^ke^{-f(t)}}{k!} \frac{1}{na'(t)} df(t)\\
			=&   \int_{0}^{\frac r2} \frac{1}{a'(t)}d\left(-e^{-f(t)} \sum_{i=0}^{k}\frac{(f(t))^i}{i!}  \right)\\
			=&-\frac{1}{a'(t)}e^{-f(t)}\sum_{i=0}^{k}\frac{(f(t))^i}{i!}|_{0}^{\frac r2}
			-\int_{0}^{\frac r2} \frac{a''(t)}{(a'(t))^2}e^{-f(t)}\sum_{i=0}^{k}\frac{(f(t))^i}{i!}dt.
		\end{aligned}
	\end{equation*}
	
	(i) First term:

	\begin{equation*}
		\begin{aligned}
			e^{-f(\frac{r}{2})} = n^{-A} (\log n)^{-AC_k}e^{-A\xi},
		\end{aligned}
	\end{equation*}
	
	\begin{equation*}
		\begin{aligned}
			-\frac{1}{a'(\frac{r}{2})}e^{-f(\frac{r}{2})}\sum_{i=0}^{k}\frac{(f(\frac{r}{2}))^i}{i!} =& -\frac{n^{-A} (\log n)^{-AC_k}e^{-A\xi}}{E n^{1-\frac{1}{d}}\left(\log n+ C_k \log\log n+\xi\right)^{1-\frac{1}{d}}} \sum_{i=0}^k\frac{(A\left( \log n+ C_k \log\log n+\xi\right))^i}{i!},
		\end{aligned}
	\end{equation*}
	
	$$
	-\frac{1}{a'(\frac{r}{2})}e^{-f(\frac{r}{2})}\sum_{i=0}^{k}\frac{(f(\frac{r}{2}))^i}{i!} \sim -\frac{1}{a'(\frac{r}{2})}e^{-f(\frac{r}{2})}\frac{(f(\frac{r}{2}))^k}{k!},
	$$
	
	\begin{equation*}
		\begin{aligned}
			-\frac{1}{a'(\frac{r}{2})}e^{-f(\frac{r}{2})}\frac{(f(\frac{r}{2}))^k}{k!} =& -\frac{n^{-A} (\log n)^{-AC_k}e^{-A\xi}}{E n^{\frac{1}{d}-1}\left(\log n+ C_k \log\log n+\xi\right)^{1-\frac{1}{d}}}\frac{(A\left( \log n+ C_k \log\log n+\xi\right))^k}{k!} \\
			=& -\frac{A^k}{e^{A\xi}Ek!} \frac{\left( \log n+ C_k \log\log n+\xi\right)^k}{n^{A+\frac{1}{d}-1}(\log n)^{AC_k}\left(\log n+ C_k \log\log n+\xi\right)^{1-\frac{1}{d}}}.
		\end{aligned}
	\end{equation*}
	
	Since  $A+\frac{1}{d}-1 > B + \frac{1}{d}-1 = 0$, we conclude that
	$$
	 -\frac{1}{a'(\frac{r}{2})}e^{-f(\frac{r}{2})}\frac{(f(\frac{r}{2}))^k}{k!} = o(1),
	$$
	and hence
	$$
		-\frac{1}{a'(\frac{r}{2})}e^{-f(\frac{r}{2})}\sum_{i=0}^{k}\frac{(f(\frac{r}{2}))^i}{i!} = o(1).
	$$
	
	(ii) Second term:
	
	$$
	-\frac{1}{a'(0)}e^{-f(0)}\sum_{i=0}^{k}\frac{(f(0))^i}{i!} \sim -\frac{1}{a'(0)}e^{-f(0)}\frac{(f(0))^k}{k!},
	$$
	
	\begin{equation*}
		\begin{aligned}
			-\frac{1}{a'(0)}e^{-f(0)}\frac{(f(0))^k}{k!} =& -\frac{n^{-B} (\log n)^{-BC_k}e^{-B\xi}}{Dn^{\frac{1}{d}-1}\left(\log n+ C_k \log\log n+\xi\right)^{1-\frac{1}{d}}} \frac{(B\left( \log n+ C_k \log\log n+\xi\right))^k}{k!} \\
			= & -\frac{B^k}{e^{B\xi}Dk!} \frac{\left( \log n+ C_k \log\log n+\xi\right)^k}{n^{B+\frac{1}{d}-1}(\log n)^{BC_k}\left(\log n+ C_k \log\log n+\xi\right)^{1-\frac{1}{d}}}.
		\end{aligned}
	\end{equation*}
	
	Since $B + \frac{1}{d} - 1 = 0$ and $C_k=\frac{dk-d+1}{d-1}$, it follows that
	$$
		BC_k + 1 - \frac{1}{d} = \left(1-\frac{1}{d}\right)\left(\frac{dk-d+1}{d-1}\right)+1-\frac{1}{d} = k.
	$$
	Therefore, the leading contribution of the second term satisfies
	$$
		-\frac{1}{a'(0)}e^{-f(0)}\frac{(f(0))^k}{k!} \sim -\frac{B^k}{e^{B\xi}Dk!}.
	$$

	(iii) Third term

	$$
	a'(t)
	=\;V_{d-1}(1)\,r^{\,d-1}\Bigl(1-\frac{t^2}{r^2}\Bigr)^{\frac{d-1}2},
	$$
	
	$$
	a''(t)
	= - (d-1)\,V_{d-1}(1)\; t\; r^{d-3}\left(1 - \frac{t^2}{r^2}\right)^{\frac{d-3}{2}}.
	$$
	
	When $t\leq \frac{r}{2}$,
	\begin{equation*}
		\begin{aligned}
			\left|\frac{a''(t)}{(a'(t))^3}\right|=&\left|\frac{- (d-1)\,V_{d-1}(1)\; t\; r^{d-3}\left(1 - \frac{t^2}{r^2}\right)^{\frac{d-3}{2}}}{\left(V_{d-1}(1)\,r^{\,d-1}\Bigl(1-\frac{t^2}{r^2}\Bigr)^{\frac{d-1}2}\right)^3}\right| \\
			=& \left|\frac{(d-1)t}{(V_{d-1}(1))^2r^{2d}\left(1 - \frac{t^2}{r^2}\right)^{d}}\right| \\
			\leq& \frac{4^d(d-1)}{2\times 3^d[V_{d-1}(1)]^2r^{2d-1}}.
		\end{aligned}
	\end{equation*}
	
	Then
	\begin{eqnarray*}
		&&\left|\int_{0}^{\frac r2} \frac{a''(t)}{(a'(t))^2}e^{-f(t)}\sum_{i=0}^{k}\frac{(f(t))^i}{i!}dt\right|\\
		&=&\frac{1}{n}\left|\int_{0}^{\frac r2} \frac{a''(t)}{(a'(t))^3}e^{-f(t)}\sum_{i=0}^{k}\frac{(f(t))^i}{i!}df(t)\right|\\
		&\leq & \frac{4^d(d-1)}{2\times 3^d[V_{d-1}(1)]^2} \frac{1}{nr^{2d-1}} \left|\int_{0}^{\frac r2} e^{-f(t)}\sum_{i=0}^{k}\frac{(f(t))^i}{i!}df(t)\right|\\
		&\leq & \Theta(1)\frac{4^d(d-1)}{2\times 3^d[V_{d-1}(1)]^2} \frac{1}{nr^{2d-1}} \left|\int_{0}^{\frac r2} e^{-f(t)}\frac{(f(t))^k}{k!}df(t)\right|\\
		&= & \Theta(1)\frac{4^d(d-1)}{2\times 3^d[V_{d-1}(1)]^2} \frac{1}{nr^{2d-1}} \left|\int_{0}^{\frac r2} d\left(-e^{-f(t)}\sum_{i=0}^{k}\frac{(f(t))^i}{i!}\right) \right|\\
		&\leq & \Theta(1)\frac{4^d(d-1)}{2\times 3^d[V_{d-1}(1)]^2} \frac{1}{nr^{2d-1}}e^{-f(0)}\sum_{i=0}^{k}\frac{(f(0))^i}{i!}.
	\end{eqnarray*}
	
	Due to
	
	$$
	\frac{1}{nr^{2d-1}}e^{-f(0)}\sum_{i=0}^{k}\frac{(f(0))^i}{i!}  \sim
	\frac{1}{nr^{2d-1}}e^{-f(0)}\frac{(f(0))^k}{k!} ,
	$$
	
	\begin{equation*}
		\begin{aligned}
			\frac{1}{nr^{2d-1}}e^{-f(0)}\frac{(f(0))^k}{k!} = & \frac{c_d^{\frac{2d-1}{d}}n^{\frac{d-1}{d}}n^{-B}(\log n)^{-BC_k}e^{-B\xi}}{\left(\log n+ C_k \log\log n+\xi\right)^{\frac{2d-1}{d}}} \frac{(B\left( \log n+ C_k \log\log n+\xi\right))^k}{k!} \\
			= & \frac{c_d^{\frac{2d-1}{d}}B^k}{e^{B\xi}k!}n^{\frac{d-1}{d}-B}\frac{\left( \log n+ C_k \log\log n+\xi\right)^k}{(\log n)^{BC_k}\left(\log n+ C_k \log\log n+\xi\right)^{\frac{2d-1}{d}}}.
		\end{aligned}
	\end{equation*}
	Since $B + \frac{1}{d} - 1 = 0$ and $C_k=\frac{dk-d+1}{d-1}$, it follows that
	$$
	BC_k + 2 - \frac{1}{d} = \left(1-\frac{1}{d}\right)\left(\frac{dk-d+1}{d-1}\right)+2-\frac{1}{d} = k + 1.
	$$
	Therefore, the leading contribution of the third term satisfies
	$$
	\frac{1}{nr^{2d-1}}e^{-f(0)}\frac{(f(0))^k}{k!} = o(1).
	$$

	Therefore,
	$$
	n\int_{0}^{\frac r2} \frac{(f(t))^ke^{-f(t)}}{k!}dt\sim \frac{1}{a'(0)}e^{-f(0)}\frac{(f(0))^k}{k!} \sim \frac{B^k}{e^{B\xi}Dk!}=\frac{\left(\frac{d-1}{d}\right)^k\left(\frac{d}{2(d-1)}V_d(1)\right)^{\frac{d-1}{d}}}{e^{\frac{d-1}{d}\xi}{V_{d-1}(1)}k!}.
	$$	
\end{proof}

In particular, the lemma shows that

(1) $d = 2$, $k\geq 1$:  $V_1(1)=2$, $V_2(1)=\pi$:

$$
\frac{B^k}{e^{B\xi}Dk!} = \frac{\left(\frac{1}{2}\right)^k\left(\frac{2}{2}V_2(1)\right)^{\frac{1}{2}}}{e^{\frac{1}{2}\xi}{V_{1}(1)}k!} = \frac{1}{2^{k+1}} \frac{\pi^{\frac{1}{2}}}{ e^{\frac{1}{2}\xi} k!}
$$

(2) $d = 3, k\geq 0$: $V_2(1)=\pi$, $V_3(1)=\frac{4}{3}\pi$:

$$
\frac{B^k}{e^{B\xi}Dk!} = \frac{\left(\frac{2}{3}\right)^k\left(\frac{3}{4}V_3(1)\right)^{\frac{2}{3}}}{e^{\frac{2}{3}\xi}{V_{2}(1)}k!} = \left(\frac{2}{3}\right)^k\frac{\left(\frac{3}{4}\times \frac{4}{3}\pi\right)^{\frac{2}{3}}}{e^{\frac{2}{3}\xi}\pi k!} = \left(\frac{2}{3}\right)^k \frac{1}{e^{\frac{2}{3}\xi}\pi^{\frac{1}{3}}k!}
$$

\subsection{Proof sketch of Proposition~\ref{pro:Explicit-Form}: $\partial\Omega$ is $C^2$ smooth}

We first prove the result under the assumption that the boundary $\partial \Omega$ is $C^2$ smooth, while later prove the case for general $\Omega$ by approximation. Follow the method shown in~\cite{Ding-RGG2018, Ding-RGG2025, Ding2022-3DRadii},  to prove Proposition~\ref{pro:Explicit-Form}, i.e., $ n\int_{\Omega} \psi^k_{n,r}(x)dx\sim e^{-c}$, we divide $\Omega$ into four disjoint parts (when $r$ is sufficient small):
$$
    \Omega=\Omega(0)\cup \Omega(2) \cup \Omega(1,1) \cup \Omega(1,2),
$$
where
$$
     \Omega(0)=\left\{x\in\Omega:\mathrm{dist}(x,\partial\Omega)\geq r\right\}, \quad
     \Omega(2)=\left\{x\in\Omega:\mathrm{dist}(x,\partial\Omega)\leq
     (G(\Omega)+1)r^2\right\},
$$
and $\Omega(1)= \Omega\backslash (\Omega(0)\cup \Omega(2))$,
$$
    \Omega(1,1)=\left\{x\in \Omega(1):t(x)\leq \frac r2\right\}, \quad
   \Omega(1,2)=\Omega(1)\setminus\Omega(1,1).
$$
The function $t(x)$ is similarly defined in~\cite{Ding-RGG2018, Ding-RGG2025, Ding2022-3DRadii}.
Next, follow the method shown in~\cite{Ding-RGG2018, Ding-RGG2025, Ding2022-3DRadii}, it is not difficult to prove the following four claims:

\begin{claim}\label{pro:Omega(0)}
 $n\int_{\Omega(0)} \psi^k_{n,r}(x)dx= o\left(1\right). $
\end{claim}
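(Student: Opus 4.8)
The plan is to exploit the fact that $\Omega(0)$ consists of points so far from the boundary that the ball $B(x,r_n)$ lies entirely inside $\Omega$, which makes the integrand constant on $\Omega(0)$. Throughout, $r=r_n$ is the radius given by~(\ref{eq:Theorem-radius}). First I would observe that for every $x\in\Omega(0)$ we have $\mathrm{dist}(x,\partial\Omega)\ge r_n$, hence $B(x,r_n)\subseteq\Omega$ and therefore $|B(x,r_n)\cap\Omega|=|B(x,r_n)|=V_d(1)\,r_n^{\,d}$. Substituting the value of $r_n$ yields
$$
n|B(x,r_n)\cap\Omega|=nV_d(1)r_n^{\,d}
=\frac{2(d-1)}{d}\Bigl(\log n+\tfrac{dk-d+1}{d-1}\log\log n+\xi\Bigr),
$$
which is independent of $x$. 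Writing $\lambda:=\frac{2(d-1)}{d}$ and $L:=\log n+\tfrac{dk-d+1}{d-1}\log\log n+\xi$, the integrand $\psi^k_{n,r_n}(x)=\frac{(\lambda L)^k e^{-\lambda L}}{k!}$ is constant on $\Omega(0)$.

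Since $|\Omega(0)|\le|\Omega|=1$, this immediately gives
$$
n\int_{\Omega(0)}\psi^k_{n,r_n}(x)\,dx\le n\cdot\frac{(\lambda L)^k}{k!}\,e^{-\lambda L}
=\frac{\lambda^k}{k!}\,e^{-\lambda\xi}\;n^{1-\lambda}\,L^k\,(\log n)^{-\lambda\frac{dk-d+1}{d-1}},
$$
using $e^{-\lambda\log n}=n^{-\lambda}$ and $e^{-\lambda\frac{dk-d+1}{d-1}\log\log n}=(\log n)^{-\lambda\frac{dk-d+1}{d-1}}$. Because $d\ge 3$ we have $\lambda=2-\frac{2}{d}>1$, so $1-\lambda=-\frac{d-2}{d}<0$ and $n^{1-\lambda}$ decays polynomially in $n$. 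Since $L^k=\Theta\bigl((\log n)^k\bigr)$ and the remaining factor is a fixed power of $\log n$, the whole bound is $\Theta\!\bigl(n^{-(d-2)/d}(\log n)^{m}\bigr)$ with $m=k-\lambda\frac{dk-d+1}{d-1}$ a fixed exponent; regardless of the sign of $m$, the polynomial decay dominates, so the bound tends to $0$. This proves Claim~\ref{pro:Omega(0)}.

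The argument is essentially a one-line computation and there is no real obstacle. The only point worth stressing is that the claim relies on the normalising constant $\frac{d}{2(d-1)}V_d(1)$ in the denominator of $r_n$: had $r_n$ been defined with the ``natural'' volume constant $V_d(1)$ (so that $n|B(x,r_n)|\sim\log n$), the interior contribution would be $\Theta(1)$ rather than $o(1)$. The factor $\frac{d}{2(d-1)}<1$ is precisely what forces $\lambda>1$ and renders $\Omega(0)$ negligible, thereby concentrating the mass of $n\int_\Omega\psi^k_{n,r_n}$ near $\partial\Omega$ and producing the dependence on $\mathrm{Area}(\partial\Omega)$ in~(\ref{eq:Explicit-Form}).
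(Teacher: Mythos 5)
Your proof is correct and is exactly the standard argument the paper invokes by reference for Claim~\ref{pro:Omega(0)}: for $x\in\Omega(0)$ the ball lies inside $\Omega$, so $n|B(x,r_n)\cap\Omega|=\frac{2(d-1)}{d}\bigl(\log n+\cdots\bigr)$ with coefficient $\lambda=\frac{2(d-1)}{d}>1$ for $d\ge 3$, giving a bound of order $n^{1-\lambda}(\log n)^{m}=o(1)$. Your closing observation that the constant $\frac{d}{2(d-1)}<1$ is precisely what makes the interior negligible and concentrates the mass near $\partial\Omega$ is also accurate and consistent with the boundary-dominated form of~(\ref{eq:Theorem-radius-1}).
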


 \begin{claim}\label{pro:Omega(2)}
$
   n\int_{\Omega(2)} \psi^k_{n,r}(x)dx=o\left(1\right).
$
\end{claim}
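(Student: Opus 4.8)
The plan is to bound the contribution of the thin boundary shell $\Omega(2)$ crudely, exploiting that throughout $\Omega(2)$ the integrand $\psi^k_{n,r_n}(x)$ decays like a negative power of $n$. I would first record two facts (here $r=r_n$). (a) \emph{Volume of the shell:} since $\partial\Omega$ is $C^2$ with finite area, the coarea/inner--parallel--body estimate gives $|\Omega(2)|\le (G(\Omega)+1)\,\mathrm{Area}(\partial\Omega)\,r^2\,(1+o(1))$ as $r\to0$, so $|\Omega(2)|=\Theta(r^2)$. (b) \emph{Half-ball lower bound:} for every $x\in\Omega(2)$ and all sufficiently small $r$ one has $|B(x,r)\cap\Omega|\ge(\tfrac12-\varepsilon)V_d(r)$ for any prescribed $\varepsilon>0$. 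Indeed $\mathrm{dist}(x,\partial\Omega)\le (G(\Omega)+1)r^2=o(r)$, so if $y\in\partial\Omega$ is the nearest point, then by $C^2$ smoothness the part of $\partial\Omega$ near $y$ lies within normal distance $O(r^2)$ of the tangent hyperplane at $y$, uniformly, with the implied constant the curvature-type quantity behind $G(\Omega)$; hence $B(x,r)\cap\Omega$ contains $B(x,r)$ intersected with a half-space whose bounding hyperplane is at distance $o(r)$ from $x$, whose volume is $\tfrac12V_d(r)-o(r^d)$. Equivalently, $|B(x,r)\cap\Omega|\ge a(r,-cr^2)=(\tfrac12-o(1))V_d(r)$ for a suitable constant $c$, by the formula for $a(r,t)$. (A larger value of $|B(x,r)\cap\Omega|$ only helps.)

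Next I would combine these. Since $v\mapsto v^k e^{-v}/k!$ is decreasing on $(k,\infty)$ and $(\tfrac12-\varepsilon)nV_d(r_n)\to\infty$, for all large $n$ and all $x\in\Omega(2)$ we get
$$
\psi^k_{n,r_n}(x)\ \le\ \frac{\big((\tfrac12-\varepsilon)nV_d(r_n)\big)^k}{k!}\,e^{-(\tfrac12-\varepsilon)nV_d(r_n)} .
$$
As computed in the proof of Lemma~\ref{lemma:a(t)} (there $n\,a(r_n,0)=f(0)=\tfrac{d-1}{d}(\log n+C_k\log\log n+\xi)$, and $V_d(r_n)=2\,a(r_n,0)$), we have
$$
nV_d(r_n)=\frac{2(d-1)}{d}\Big(\log n+\tfrac{dk-d+1}{d-1}\log\log n+\xi\Big),
$$
so the right-hand side above is $\Theta\big((\log n)^k\,n^{-(1-2\varepsilon)\frac{d-1}{d}}\big)$. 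Since $r_n^2=\Theta\big((\log n/n)^{2/d}\big)$, it follows that
$$
n\int_{\Omega(2)}\psi^k_{n,r_n}(x)\,dx\ \le\ \Theta(1)\cdot n\cdot r_n^2\cdot(\log n)^k\,n^{-(1-2\varepsilon)\frac{d-1}{d}}\ =\ \Theta\!\Big(n^{\frac{2\varepsilon(d-1)-1}{d}}\,(\log n)^{k+\frac2d}\Big).
$$
Choosing $\varepsilon<\frac{1}{2(d-1)}$ makes the exponent of $n$ strictly negative, so the bound is $o(1)$, which is the claim.

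I expect the only genuine obstacle to be fact (b): the \emph{uniform} reduction of $B(x,r)\cap\Omega$ to (more than) a half-ball over the entire shell $\Omega(2)$. This is exactly where $C^2$ smoothness of $\partial\Omega$ enters — and why $G(\Omega)$, a uniform curvature-type bound, is built into the definition of $\Omega(2)$ — and it is handled as in \cite{Ding-RGG2018, Ding-RGG2025, Ding2022-3DRadii}. Everything after that is the exponent bookkeeping, which closes with room to spare precisely because $d\ge3$; indeed the same computation already works for $d\ge2$.
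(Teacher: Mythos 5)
Your argument is correct and is essentially the standard one that the paper defers to its references: bound $|\Omega(2)|=\Theta(r_n^2)$, use $C^2$ smoothness to get the uniform near-half-ball lower bound $|B(x,r_n)\cap\Omega|\ge(\tfrac12-\varepsilon)V_d(r_n)$ on the shell, and then check that the resulting exponent of $n$ is negative for $\varepsilon<\tfrac{1}{2(d-1)}$. The bookkeeping (including $nV_d(r_n)=\tfrac{2(d-1)}{d}(\log n+C_k\log\log n+\xi)$ and the monotonicity of $v\mapsto v^k e^{-v}$ on $[k,\infty)$) checks out, so no issues.
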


\begin{claim}\label{pro:Omega(1,2)}
$
n\int_{\Omega(1,2)} \psi^k_{n,r}(x)dx=o(1).
$
\end{claim}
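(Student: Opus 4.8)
The plan is to bound $n\int_{\Omega(1,2)}\psi^k_{n,r}(x)\,dx$ by $|\Omega(1,2)|\cdot n\sup_{x\in\Omega(1,2)}\psi^k_{n,r}(x)$ and to show that this product is already $o(1)$: on $\Omega(1,2)$ the ball $B(x,r)$ sits so deep inside $\Omega$ that $n|B(x,r)\cap\Omega|$ exceeds the borderline rate $\tfrac{d-1}{d}\log n$ by a fixed positive multiple of $\log n$, which forces $\psi^k_{n,r}$ to decay polynomially faster in $n$ than the critical rate, easily beating the prefactor $n|\Omega(1,2)|=O\!\big(n^{1-1/d}(\log n)^{1/d}\big)$.

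The first step is a uniform lower bound for $n|B(x,r)\cap\Omega|$ on $\Omega(1,2)$. Every $x\in\Omega(1,2)$ has $t(x)>\tfrac r2$ by definition, where $t(x)$ is, as in \cite{Ding-RGG2018,Ding-RGG2025,Ding2022-3DRadii}, a curvature-corrected depth of $x$ in $\Omega$; together with the $C^2$ control of $\partial\Omega$ near $x$ (the layer $\Omega(2)$ of width $\Theta(r^2)$ was removed precisely so that the nearest-point projection onto $\partial\Omega$ is well defined and $\partial\Omega\cap B(x,r)$ is a graph over its tangent hyperplane with second-order deviation $O(r^2)$) this forces $|B(x,r)\cap\Omega|\ge a(r,\beta r)$ on $\Omega(1,2)$ for some fixed $\beta\in(0,\tfrac12)$ and all small $r$. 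By the computation in the proof of Lemma~\ref{lemma:a(t)}, $n\,a(r,\beta r)=A_\beta(\log n+C_k\log\log n+\xi)$ with $A_\beta=\tfrac{V_{d-1}(1)\int_{-1}^{\beta}(1-u^2)^{(d-1)/2}\,du}{c_d}$ (here $C_k=\tfrac{dk-d+1}{d-1}$ and $c_d=\tfrac{d}{2(d-1)}V_d(1)$), and since $\int_0^{\beta}(1-u^2)^{(d-1)/2}\,du>0$ one has $A_\beta>B=\tfrac{d-1}{d}$ strictly. Fixing $A_1:=\tfrac{A_\beta+B}{2}\in(B,A_\beta)$ and using $\log n+C_k\log\log n+\xi\sim\log n$, we get $n|B(x,r)\cap\Omega|\ge A_1\log n$ for all large $n$ and all $x\in\Omega(1,2)$.

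The second step is the pointwise estimate and the conclusion. Since $m\mapsto\tfrac{m^k e^{-m}}{k!}$ is decreasing on $[k,\infty)$ and $A_1\log n>k$ for large $n$, the bound above gives $\psi^k_{n,r}(x)\le\tfrac{(A_1\log n)^k}{k!}\,n^{-A_1}$ uniformly on $\Omega(1,2)$. Moreover $\Omega(1,2)\subseteq\{x\in\Omega:\mathrm{dist}(x,\partial\Omega)\le r\}$, and the latter set, a one-sided tubular neighbourhood of the compact $C^2$ hypersurface $\partial\Omega$, has volume $O(r)$; hence $|\Omega(1,2)|=O(r)$. With $r=r_n=\Theta\!\big((n^{-1}\log n)^{1/d}\big)$ this yields
$$n\int_{\Omega(1,2)}\psi^k_{n,r}(x)\,dx\;\le\;n\cdot O(r)\cdot\frac{(A_1\log n)^k}{k!}\,n^{-A_1}\;=\;O(1)\,n^{\,1-\frac1d-A_1}(\log n)^{\frac1d+k}.$$
Because $1-\tfrac1d-A_1=B-A_1<0$ is a fixed negative constant, the right-hand side tends to $0$, which proves $n\int_{\Omega(1,2)}\psi^k_{n,r}(x)\,dx=o(1)$.

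The one genuinely delicate point is the first step. Pinning down the exact definition of $t(x)$ from \cite{Ding-RGG2018,Ding-RGG2025,Ding2022-3DRadii} and checking that $t(x)>r/2$ really does force $|B(x,r)\cap\Omega|\ge a(r,\beta r)$ for a fixed $\beta>0$ is where the $C^2$ curvature bound and the earlier excision of $\Omega(2)$ enter. Everything after that is routine, the point being that a fixed positive power of $n$ overwhelms any power of $\log n$; and the argument is robust, since any $\beta>0$ with $A_\beta>\tfrac{d-1}{d}$ would do, so the specific threshold $\tfrac12$ appearing in the definition of $\Omega(1,1)$ is inessential.
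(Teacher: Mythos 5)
Your proposal is correct and is essentially the standard argument that the paper omits here by deferring to \cite{Ding-RGG2018, Ding-RGG2025, Ding2022-3DRadii}: on $\Omega(1,2)$ the depth $t(x)>r/2$ plus the $C^2$ control (which makes the curvature correction $O(r^{d+1})=o(r^d)$) forces $n|B(x,r_n)\cap\Omega|\ge A_1\log n$ with $A_1>\tfrac{d-1}{d}$ strictly, and the collar volume $O(r_n)$ then makes the integral $O\bigl(n^{1-\frac1d-A_1}(\log n)^{k+\frac1d}\bigr)=o(1)$. The one point you rightly flag as delicate — that $t(x)>r/2$ really yields $|B(x,r)\cap\Omega|\ge a(r,\beta r)$ for a fixed $\beta\in(0,\tfrac12)$ — is exactly what the excision of the $\Theta(r^2)$-layer $\Omega(2)$ and the $C^2$ hypothesis are designed to guarantee, so no gap remains.
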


\begin{claim}\label{pro:Omega(1,1)}
$$
     n\int_{\Omega(1,1)} \psi^k_{n,r}(x)dx\sim \mathrm{Area}(\partial\Omega)
    n\int_0^{\frac r2}\frac{(na(t))^ke^{-na(t)}}{k!}dt
    \sim
   \mathrm{Area}(\partial\Omega)\frac{\left(\frac{d-1}{d}\right)^k\left(\frac{d}{2(d-1)}V_d(1)\right)^{\frac{d-1}{d}}}{e^{\frac{d-1}{d}\xi}{V_{d-1}(1)}k!}=e^{-c}.
$$
\end{claim}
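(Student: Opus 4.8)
The plan is to treat the three links of the displayed chain separately. The second $\sim$ is nothing but Lemma~\ref{lemma:a(t)} applied with $r=r_n$, and the terminal equality is the defining relation~(\ref{eq:Theorem-radius-1}) for $\xi$; so the whole task reduces to establishing the first $\sim$, namely that the boundary layer $\Omega(1,1)$ contributes $\mathrm{Area}(\partial\Omega)$ copies of the scalar integral $n\int_0^{r/2}\frac{(na(t))^k e^{-na(t)}}{k!}\,dt$. I would obtain this from two ingredients: a pointwise replacement of the true intersection volume $|B(x,r)\cap\Omega|$ by the spherical-segment volume $a(r,t(x))$, and a change to coordinates adapted to $\partial\Omega$, after which the geometry factors out as $\mathrm{Area}(\partial\Omega)$ and the remaining one-dimensional integral is the one in Lemma~\ref{lemma:a(t)}.

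\textbf{Geometric replacement.} For $x\in\Omega(1,1)$ let $p\in\partial\Omega$ be the nearest boundary point and $\nu=\nu(p)$ the outward unit normal. Since $\partial\Omega$ is $C^2$ and compact, near $p$ the surface is the graph, over the tangent hyperplane $T_p\partial\Omega$, of a $C^2$ function with vanishing gradient at $p$, so it deviates from $T_p\partial\Omega$ by at most $Mr^2$ throughout $B(x,r)$ for a fixed curvature bound $M$; and, by positivity of the reach of $\partial\Omega$, once $r$ is small $B(x,r)$ meets no other sheet of $\partial\Omega$. Using the parameter $t(x)$ of~\cite{Ding-RGG2018, Ding-RGG2025, Ding2022-3DRadii} (which for a $C^2$ boundary equals $\mathrm{dist}(x,\partial\Omega)$ up to $O(r^2)$), this traps $|B(x,r)\cap\Omega|$ between $a(r,t(x)-Mr^2)$ and $a(r,t(x)+Mr^2)$; since $a'(r,s)\le V_{d-1}(1)r^{d-1}$, this gives $|B(x,r)\cap\Omega|=a(r,t(x))+O(r^{d+1})$ uniformly on $\Omega(1,1)$. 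Because $nr^d=\Theta(\log n)$ by~(\ref{eq:Theorem-radius}), the exponent error is $n\cdot O(r^{d+1})=O(r\log n)=o(1)$, and on $\Omega(1,1)$ (where $(G(\Omega)+1)r^2<t(x)\le r/2$) one has $na(r,t(x))=\Theta(\log n)$ uniformly, so the relative error in $(n|B(x,r)\cap\Omega|)^k$ is $O(r)$; hence $\psi^k_{n,r}(x)=\dfrac{(na(r,t(x)))^k e^{-na(r,t(x))}}{k!}(1+o(1))$ uniformly for $x\in\Omega(1,1)$.

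\textbf{Boundary coordinates and the inner cut-off.} Positivity of the reach makes $(y,t)\mapsto y-t\,\nu(y)$ a diffeomorphism from $\partial\Omega\times[0,r)$ onto a tubular neighbourhood containing $\Omega(1)$, with Jacobian $\prod_{i=1}^{d-1}(1-t\kappa_i(y))=1+O(t)=1+O(r)$ on $\Omega(1,1)$; in these coordinates $\Omega(1,1)$ becomes $\{(y,t):\ (G(\Omega)+1)r^2<t\le r/2\}$ up to an error absorbed into the $1+o(1)$ above. Integrating out $y$ then gives
$$
n\int_{\Omega(1,1)}\psi^k_{n,r}(x)\,dx=(1+o(1))\,\mathrm{Area}(\partial\Omega)\,n\!\int_{(G(\Omega)+1)r^2}^{r/2}\!\frac{(na(t))^k e^{-na(t)}}{k!}\,dt .
$$
To finish I would remove the inner cut-off: on $[0,(G(\Omega)+1)r^2]$ one has $a(t)\ge a(0)=\tfrac12 V_d(1)r^d$, so $e^{-na(t)}\le e^{-na(0)}=n^{-\frac{d-1}{d}+o(1)}$, while $na(t)\le na(r/2)=O(\log n)$; hence
$$
n\!\int_0^{(G(\Omega)+1)r^2}\!\frac{(na(t))^k e^{-na(t)}}{k!}\,dt=O\!\left(n\,r^2(\log n)^k\, n^{-\frac{d-1}{d}+o(1)}\right)=O\!\left(n^{-\frac1d+o(1)}\right)=o(1),
$$
using $r^2=n^{-2/d+o(1)}$. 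Since $n\int_0^{r/2}(\cdots)\,dt$ tends to the positive constant of Lemma~\ref{lemma:a(t)}, subtracting this $o(1)$ quantity leaves the asymptotics unchanged, which yields the first $\sim$; combined with Lemma~\ref{lemma:a(t)} and~(\ref{eq:Theorem-radius-1}) the whole chain, ending in $e^{-c}$, follows.

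\textbf{Main obstacle.} The delicate point is the uniformity in the geometric replacement step: bounding $|B(x,r)\cap\Omega|-a(r,t(x))$ by a quantity that is negligible \emph{simultaneously} for every $x$ in the boundary layer. This is exactly where $C^2$-smoothness of $\partial\Omega$ is used — through a uniform curvature bound, positive reach, and the resulting guarantee that $B(x,r)$ meets $\partial\Omega$ in a single sheet once $\mathrm{dist}(x,\partial\Omega)>(G(\Omega)+1)r^2$ — and, together with checking that the adapted coordinates carry $\Omega(1,1)$ onto the claimed product set, it absorbs essentially all the technical work; the verification is carried out as in the planar and $3$-dimensional treatments of~\cite{Ding-RGG2018, Ding-RGG2025, Ding2022-3DRadii}.
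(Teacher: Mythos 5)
Your proposal is correct and follows essentially the route the paper itself takes: the paper disposes of this claim by invoking the boundary-layer method of~\cite{Ding-RGG2018, Ding-RGG2025, Ding2022-3DRadii} (replace $|B(x,r)\cap\Omega|$ by the spherical-segment volume $a(r,t(x))$ up to a uniformly negligible error, pass to tubular coordinates so that $\mathrm{Area}(\partial\Omega)$ factors out, and then apply Lemma~\ref{lemma:a(t)} together with the defining relation~(\ref{eq:Theorem-radius-1}) for $\xi$), which is exactly what you do, with the added and correctly executed detail of removing the inner cut-off at $t=(G(\Omega)+1)r^2$. The error estimates you give ($nr^{d+1}=O(r\log n)=o(1)$, Jacobian $1+O(r)$, and the $O(n^{-1/d+o(1)})$ bound on the cut-off strip) are all sound.
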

The four claims prove that $$\int_{\Omega}n\psi^k_{n,r}(x)dx=\left\{\int_{\Omega(0)}+\int_{\Omega(2)}+\int_{\Omega(1,2)}+\int_{\Omega(1,1)}\right\}
    n\psi^k_{n,r}(x)dx\sim e^{-c}.$$

\subsection{Proof   of Proposition~\ref{pro:Explicit-Form}: general $\Omega$}

 For any $\epsilon>0$, there exist  $\Omega_{\epsilon^-} $  and $  \Omega_{\epsilon^+}$ with smooth boundaries, such that $\Omega_{\epsilon^-}\subset \Omega\subset \Omega_{\epsilon^+}$, $|\Omega \backslash \Omega_{\epsilon^-}|+|\Omega_{\epsilon^+} \backslash \Omega_{\epsilon}|<\epsilon$, and
$|\mathrm{Area}(\partial\Omega )-\mathrm{Area}(\partial\Omega_{\epsilon^-})|
+|\mathrm{Area}(\partial\Omega_{\epsilon^+})-\mathrm{Area}(\partial\Omega )|<\epsilon$.
Repeat the previous proof, we similarly obtain
\begin{equation}
  n\int_{\Omega_{\epsilon^-}}\psi^k_{n,r}(x)dx \sim \mathrm{Area}(\partial\Omega_{\epsilon^-})  \frac{\left(\frac{d-1}{d}\right)^k\left(\frac{d}{2(d-1)}V_d(1)\right)^{\frac{d-1}{d}}}{e^{\frac{d-1}{d}\xi}{V_{d-1}(1)}k!},
  \end{equation}
\begin{equation}
  n\int_{\Omega_{\epsilon^+}}\psi^k_{n,r}(x)dx \sim \mathrm{Area}(\partial\Omega_{\epsilon^+})  \frac{\left(\frac{d-1}{d}\right)^k\left(\frac{d}{2(d-1)}V_d(1)\right)^{\frac{d-1}{d}}}{e^{\frac{d-1}{d}\xi}{V_{d-1}(1)}k!}.
\end{equation}
That is, there exists $N>0$ such that $\forall n>N$,
\begin{equation}
 \left|n\int_{\Omega_{\epsilon^-}}\psi^k_{n,r}(x)dx-e^{-c} \right|<M_1\epsilon, \quad \left|n\int_{\Omega_{\epsilon^+}}\psi^k_{n,r}(x)dx-e^{-c} \right|<M_2\epsilon,
\end{equation}
where $e^{-c}= \mathrm{Area}(\partial\Omega)  \frac{\left(\frac{d-1}{d}\right)^k\left(\frac{d}{2(d-1)}V_d(1)\right)^{\frac{d-1}{d}}}{e^{\frac{d-1}{d}\xi}{V_{d-1}(1)}k!}$,
 $M_1$ and $M_2$ are some positive constants.
By
$$
         n\int_{\Omega_{\epsilon^-}}\psi^k_{n,r}(x)dx   \leq n\int_{\Omega }\psi^k_{n,r}(x)dx \leq n\int_{\Omega_{\epsilon^-}}\psi^k_{n,r}(x)dx,
$$
we know
$$
         n\int_{\Omega }\psi^k_{n,r}(x)dx  \sim e^{-c}.
$$

\section{Proof of Proposition~\ref{pro:conclusion-1}}\label{sec:Poissonized-section}

To prove  Proposition~\ref{pro:conclusion-1}, we  instead to prove  its Poissonized version, according to the approach
shown in~\cite{Ding-RGG2018, Ding-RGG2025, Ding2022-3DRadii}.
Then by the standard de-Poissonized technique to see that Proposition~\ref{pro:conclusion-1} holds.

\subsection{Poissonized version of Proposition~\ref{pro:conclusion-1}}

Notice the fact that uniform $n$-point process  $\chi_n$ can be well approximated by a homogeneous Poisson point process $\mathcal{P}_n$ distributed over region $\Omega$ with intensity $n|\Omega|=n$. For any measurable subregion $A\subset\Omega$,
the number of the points in $A$  denoted by $\mathcal{P}_n(A)$, satisfies the following
Poisson distribution with intensity $n|A|$:
$$
   \Pr\left\{\mathcal{P}_n(A)=j\right\}=\frac{e^{-n|A|}(n|A|)^j}{j!},\quad j\geq0.
$$
Given the condition that there are exactly $j$ points in $A$,   these $j$ points are independently and uniformly distributed in $A$.
If $A_i\subset \Omega$ are disjoint, then  $\mathcal{P}_n(A_i)$ are mutually independent Poisson random variables with intensities
$n|A_i|$ respectively, for $i=1,2$. In addition, $\mathcal{P}_n (A_1\cup A_2)=\sum_{i=1}^2\mathcal{P}_n(A_i)$ is a Poisson random variable with intensity $n(|A_1|+|A_2|)$. Such properties make $\chi_n$ can be well approximated by $\mathcal{P}_n$.

Suppose a set of points $\{X_i\}_{i=1}^\infty$ are independent random $d$- vectors having common probability density function $f$, which is almost everywhere continuous on $\mathbb{R}^d$, and the corresponding probability distribution is denoted by $F$. Let
$N_\lambda$ be a Poisson random variable, independent of $\{X_i\}_{i=1}^\infty$, and define Poisson point process
$\mathcal{P}_\lambda=\{X_1,X_2,\cdots,X_{N_\lambda}\}$. $G(\mathcal{P}_\lambda, r)$ is a geometric graph on point set
$\mathcal{P}_\lambda$ with distance parameter $r$, i.e., any pair of two points whose distance is no more than $r$ are connected.
Denote $\mathrm{Po}(\beta)$ for any Poisson random variable with parameter $\beta$.

\begin{lemma}\label{lem:Poisson-approximation}(Theorem~6.7, \cite{Penrose-RGG-book})
Let $W'_{k,\lambda}(r)$ be the number of vertices of degree $k$ in $G(\mathcal{P}_\lambda, r)$.
For $x\in \mathbb{R}^d$, denote $B_x=B(x,r)$ and $\mathcal{P}_\lambda^x=\mathcal{P}_\lambda\cup\{x\}$. Define
\begin{eqnarray*}
 J_1  &=&\lambda^2\int_{R^d}\mathrm{d}F(x)\\
 &\times&\int_{B(x,3r)}\mathrm{d}F(y)\Pr\{\mathcal{P}_\lambda(B_x)=k\}\Pr\{\mathcal{P}_\lambda(B_y)=k\},
\end{eqnarray*}
\begin{eqnarray*}
J_2&=&\lambda^2\int_{R^d}\mathrm{d}F(x)\\
&\times&\int_{B(x,3r)}\mathrm{d}F(y)\Pr\{ \{\mathcal{P}^y_\lambda(B_x)=k\}\cap \{\mathcal{P}^x_\lambda(B_y)=k\}\}.
\end{eqnarray*}
Then
$$
    d_{\mathrm{TV}}(W'_{k,\lambda}(r), \mathrm{Po}(E(W'_{k,\lambda}(r))))\leq 3 (J_1+J_2),
$$
where $d_{\mathrm{TV}}$ refers to total variation distance.
\end{lemma}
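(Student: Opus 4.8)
This statement is Theorem~6.7 of~\cite{Penrose-RGG-book}; we indicate the line of proof, which is the Stein--Chen method for Poisson approximation in its local-dependence form. Write $W=W'_{k,\lambda}(r)=\sum_{X\in\mathcal{P}_\lambda}\mathbf{1}\{\deg_{G(\mathcal{P}_\lambda,r)}(X)=k\}$ and $\mu=E(W)$. The starting point is the general Stein--Chen bound: for a sum $W=\sum_{\alpha}\xi_\alpha$ of $\{0,1\}$-valued variables together with a choice of dependency neighbourhoods, $d_{\mathrm{TV}}(W,\mathrm{Po}(\mu))\le \min(1,\mu^{-1})(b_1+b_2)+b_3$, where $b_1$ sums products of individual means over neighbouring pairs, $b_2$ sums joint means over neighbouring pairs, and $b_3$ measures the residual dependence of each $\xi_\alpha$ on the configuration outside its neighbourhood. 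The plan is to realise this bound for $W$ with neighbourhoods chosen so that $b_3=0$ and $b_1,b_2$ coincide with $J_1,J_2$.

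First I would fix the dependency structure. For a point $x$, the event $\{\deg_{G(\mathcal{P}_\lambda,r)}(x)=k\}$ depends only on the restriction of $\mathcal{P}_\lambda$ to $B(x,r)$, being exactly $\{\mathcal{P}_\lambda(B(x,r)\setminus\{x\})=k\}$. Hence for two points $x,y$ the degree-$k$ events are measurable with respect to $\mathcal{P}_\lambda$ on $B(x,r)$ and on $B(y,r)$ respectively, and these become independent once $\|x-y\|>2r$; the extra effect of one added point on the other vertex's degree occurs only when $\|x-y\|\le r$, so a valid (and generous) range of influence is $2r$, which is contained in $B(x,3r)$. Taking the neighbourhood of $x$ to be $\mathcal{P}_\lambda\cap B(x,3r)$ therefore gives a genuine dependency neighbourhood and annihilates the term $b_3$.

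The remaining step is to convert $b_1$ and $b_2$ into integrals by Palm calculus. By the Slivnyak--Mecke formula for the Poisson process of intensity $\lambda F$, single-point sums $\sum_\alpha(\cdot)$ become $\lambda\int_{\mathbb{R}^d}(\cdot)\,\mathrm{d}F(x)$ with the configuration seen from $x$ taken to be $\mathcal{P}_\lambda$ itself; this yields $\mu=\lambda\int\Pr\{\mathcal{P}_\lambda(B_x)=k\}\,\mathrm{d}F(x)$ and $b_1=J_1$. For $b_2$ one applies the second-order (bivariate) Mecke formula: the expected number of ordered pairs of points of $\mathcal{P}_\lambda$ within distance $3r$ that are both of degree $k$ equals $J_2$, the two extra points $x,y$ appearing inside the probability because the Palm configuration of a Poisson process at $\{x,y\}$ is $\mathcal{P}_\lambda\cup\{x,y\}$. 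Combining the Stein factor $\min(1,\mu^{-1})$ with the diagonal correction bounds the overall constant by $3$, which gives $d_{\mathrm{TV}}(W,\mathrm{Po}(\mu))\le 3(J_1+J_2)$.

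The main obstacle is the careful bookkeeping in the Palm step: whether a vertex's degree is evaluated in $\mathcal{P}_\lambda$, in $\mathcal{P}_\lambda^x$, or in $\mathcal{P}_\lambda^x\cup\{y\}$ matters, since these differ by the indicator $\mathbf{1}\{\|x-y\|\le r\}$, and one must check that the diagonal $x=y$ and the convention that a point does not count itself are handled so that the clean form with exactly $B(x,3r)$ and the constant $3$ emerges. Beyond that, everything is a direct specialisation of the standard Poisson approximation theorem for geometric functionals (Chapter~6 of~\cite{Penrose-RGG-book}).
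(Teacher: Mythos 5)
The paper gives no proof of this lemma at all: it is imported verbatim as Theorem~6.7 of \cite{Penrose-RGG-book}, so there is nothing internal to compare against. Your sketch --- the Stein--Chen local-dependence bound with dependency neighbourhoods of radius $3r$ killing the $b_3$ term, followed by the Mecke/Slivnyak formula to turn $b_1$ and $b_2$ into the integrals $J_1$ and $J_2$ --- is a faithful outline of the proof given in that reference, and correctly flags the only delicate point (whether degrees are computed in $\mathcal{P}_\lambda$, $\mathcal{P}_\lambda^x$, or $\mathcal{P}_\lambda^x\cup\{y\}$, which is where the constant $3$ and the exact form of $J_2$ come from).
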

This lemma indicates that { $W'_{k,\lambda}(r)$ can be approximated by a Poisson random variable}  in  distribution with parameter
$E(W'_{k,\lambda}(r)))$, as long as $J_1+J_2\rightarrow 0$.

\par For the  uniform $n$-point process  $\chi_n$ over $\Omega$ considered in this article, with uniform density function $f(x)=1$ on $\Omega$, by Palm theory (Theorem~1.6, \cite{Penrose-RGG-book}),
$$
     EW'_{k,n}(r)=  n\int_{\Omega} \psi^k_{n,r}(x)f(x)\mathrm{d}x   = n\int_{\Omega} \psi^k_{n,r}(x)\mathrm{d}x  \sim e^{-c}.
$$
Here $n\int_{\Omega} \psi^k_{n,r}(x)\mathrm{d}x\sim e^{-c}$, which is demonstrated in Proposition~\ref{pro:Explicit-Form},  and has been proved in the above section.

That is to say, random variable $W'_{k,\lambda}(r)$ as  the number of vertices of degree $k$, can satisfy a Poisson distribution $\mathrm{Po}(e^{-c})$ in the asymptotic sense, according to Lemma~\ref{lem:Poisson-approximation}.
Therefore, the proof of Proposition~\ref{pro:conclusion-1}:
$$
 \lim_{n\rightarrow\infty}\Pr\left\{\rho(\chi_n;\delta\geq k+1)\leq r_n\right\} =e^{-e^{-c}},
$$
can be divided into two steps. First, we  prove the the following Proposition~\ref{pro:Poisson-Version}, the
Poissonized version of Proposition~\ref{pro:conclusion-1}.
Then,  by the de-Poissonized technique,   to see the original statement (\ref{eq:conclusion-1}) holds. The de-Poissonized technique is standard and thus omitted here, please see~\cite{Penrose-k-connectivity} for details.

\begin{proposition}\label{pro:Poisson-Version}
Under the assumptions of Theorem~\ref{thm:Main3D},
\begin{equation}\label{eq:concusion-1-Poisson}
\lim_{n\rightarrow\infty}\Pr\left\{\rho(\mathcal{P}_n;\delta\geq k+1)\leq r_n\right\} =\exp\left(-e^{-c}\right),
\end{equation}
where $\mathcal{P}_n$ is a homogeneous Poisson point process of intensity $n$ (i.e., $n|\Omega|$) distributed over unit-volume  region $\Omega$.
\end{proposition}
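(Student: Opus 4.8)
The plan is to reduce the statement to a Poisson approximation for the number of low-degree vertices, following the framework of \cite{Ding-RGG2018, Ding-RGG2025, Ding2022-3DRadii}. Since the degree of every vertex of $G(\mathcal{P}_n,r)$ is non-decreasing in $r$, the event $\{\rho(\mathcal{P}_n;\delta\geq k+1)\leq r_n\}$ coincides with the event that $G(\mathcal{P}_n,r_n)$ has no vertex of degree at most $k$; writing $W'_{j,n}(r)$ for the number of vertices of degree $j$ in $G(\mathcal{P}_n,r)$, this is $\bigcap_{j=0}^{k}\{W'_{j,n}(r_n)=0\}$. Hence it suffices to prove
\[
\lim_{n\to\infty}\Pr\Big\{\textstyle\bigcap_{j=0}^{k}\{W'_{j,n}(r_n)=0\}\Big\}=e^{-e^{-c}}.
\]

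First I would treat the critical index $j=k$. By Palm theory, $EW'_{k,n}(r_n)=n\int_\Omega\psi^k_{n,r_n}(x)\,\mathrm{d}x$, which tends to $e^{-c}$ by Proposition~\ref{pro:Explicit-Form}. Feeding this into Lemma~\ref{lem:Poisson-approximation} gives $d_{\mathrm{TV}}(W'_{k,n}(r_n),\mathrm{Po}(e^{-c}))\to 0$, provided $J_1+J_2\to 0$, and hence $\Pr\{W'_{k,n}(r_n)=0\}\to e^{-e^{-c}}$. Next I would dispose of the indices $0\le j\le k-1$. Re-running the proof of Proposition~\ref{pro:Explicit-Form} --- the same decomposition $\Omega=\Omega(0)\cup\Omega(2)\cup\Omega(1,1)\cup\Omega(1,2)$ and the same four claims, but with exponent $j$ in place of $k$ while keeping $r_n$ calibrated for $k$ --- reduces $EW'_{j,n}(r_n)$, up to $o(1)$, to $\mathrm{Area}(\partial\Omega)\,n\int_0^{r_n/2}\frac{(na(t))^j e^{-na(t)}}{j!}\,\mathrm{d}t$; and the boundary-term evaluation in the proof of Lemma~\ref{lemma:a(t)} now yields, since $BC_k+1-\tfrac1d=k$ with $B=\tfrac{d-1}{d}$ and $C_k=\tfrac{dk-d+1}{d-1}$,
\[
n\int_0^{r_n/2}\frac{(na(t))^j e^{-na(t)}}{j!}\,\mathrm{d}t\ \sim\ \frac{B^j}{e^{B\xi}D\,j!}\,(\log n)^{\,j-k}\ \longrightarrow\ 0 ,\qquad 0\le j\le k-1 .
\]
Thus $EW'_{j,n}(r_n)\to 0$ and, by Markov's inequality, $\Pr\{W'_{j,n}(r_n)\ge 1\}\to 0$ for each $j<k$. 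Combining the two parts is then immediate from the inclusions
\[
\Pr\{W'_{k,n}(r_n)=0\}-\sum_{j=0}^{k-1}\Pr\{W'_{j,n}(r_n)\ge 1\}\ \le\ \Pr\Big\{\textstyle\bigcap_{j=0}^{k}\{W'_{j,n}(r_n)=0\}\Big\}\ \le\ \Pr\{W'_{k,n}(r_n)=0\},
\]
since both the lower and the upper bound converge to $e^{-e^{-c}}$.

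The step I expect to be the main obstacle is the verification that $J_1+J_2\to 0$. The useful structural fact is that $\Pr\{\mathcal{P}_n(B(x,r_n)\cap\Omega)=k\}=\psi^k_{n,r_n}(x)$ is concentrated in a collar of $\partial\Omega$ of width $\Theta(r_n)$, on which it is $O\big((\log n/n)^{(d-1)/d}\big)$. For $J_1$ this means $\int_{B(x,3r_n)\cap\Omega}\psi^k_{n,r_n}(y)\,\mathrm{d}y$ only picks up a $(d-1)$-dimensional boundary patch of area $\Theta(r_n^{d-1})$ times the depth integral $\int_0^{r_n/2}\psi^k_{n,a(t)}\,\mathrm{d}t=\Theta(1/n)$ of Lemma~\ref{lemma:a(t)}, so $J_1=n^2\cdot O(r_n^{d-1}/n)\cdot\int_\Omega\psi^k_{n,r_n}=O(r_n^{d-1})=O\big((\log n/n)^{(d-1)/d}\big)\to 0$. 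For $J_2$ the integrand coincides with that of $J_1$ once $\|x-y\|>2r_n$ (the two balls are disjoint and the added points are irrelevant), so the genuine work is the range $\|x-y\|\le 2r_n$: there one decomposes $B(x,r_n)$ and $B(y,r_n)$ into their common lens and two crescents and uses that the joint event forces $\mathcal{P}_n$ to be small on $B(x,r_n)\cup B(y,r_n)$, producing a surplus factor $e^{-n|(B(y,r_n)\setminus B(x,r_n))\cap\Omega|}$. Away from configurations in which that crescent is almost tangent to $\partial\Omega$, this factor is of order $e^{-\Theta(n r_n^{d-1}\|x-y\|)}$, and integrating in $y$ then gains an extra $(n r_n^{d-1})^{-d}=\Theta\big(n^{-1}(\log n)^{-(d-1)}\big)$ over the naive bound; the exceptional near-tangent configurations form a set of small measure, and there $B(x,r_n)\cap B(y,r_n)$ is itself thin, so that sub-case reduces once more to the $J_1$-type estimate. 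Throughout, the hypothesis that $\partial\Omega$ is $C^2$ (or, for a general $\Omega$ satisfying \textbf{Assumption}~I, the two-sided smooth sandwiching used at the end of Section~\ref{sec:Pro-1}) guarantees that at scale $r_n$ the boundary is flat to leading order, so that $|B(x,r_n)\cap\Omega|$ is faithfully captured by the spherical segment $a(r_n,t(x))$ and the estimates of Section~\ref{sec:Pro-1} carry over.
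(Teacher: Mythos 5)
Your proposal is correct and follows essentially the same route as the paper: Chen--Stein Poisson approximation (Lemma~\ref{lem:Poisson-approximation}) combined with Palm theory and Proposition~\ref{pro:Explicit-Form} for the mean, with the correlation term $J_2$ controlled by a lower bound on the volume of the crescent $B(y,r_n)\setminus B(x,r_n)$ inside $\Omega$ (the paper's Lemma~\ref{lem:shadow-low-bound} and Proposition~\ref{proposition1} play exactly the role you assign to the ``surplus factor''). The only substantive differences are your explicit treatment of the degrees $j<k$ via the $(\log n)^{j-k}$ decay and Markov's inequality, which the paper leaves implicit, and your variant of the crescent estimate, which uses the full crescent away from tangency where the paper's Lemma~\ref{lem:shadow-low-bound} uses the half-crescent to obtain a uniform $\Theta(L^d)$ lower bound.
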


The proof of Proposition~\ref{pro:Poisson-Version} relies the following  conclusion:
\begin{proposition}\label{proposition1} Under the assumptions of Theorem~\ref{thm:Main3D}, $Z_3$ is a Poisson variable with mean $nv_{y\backslash x}=n|B(y,r)\cap \Omega|-|B(x,r)\cap B(y,r)\cap \Omega|$, then
  \begin{equation*}
\frac{1}{n\pi r^3}\int_{\Omega}n\psi^k_{n,r}(x)dx\int_{\Omega\cap
B(x,r)}n\Pr(Z_3=k-1)dy=o(1).
\end{equation*}
\end{proposition}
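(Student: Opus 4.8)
The plan is to reduce the expression to the inner integral $I(x):=\int_{\Omega\cap B(x,r)}n\Pr(Z_3=k-1)\,dy$ and to exploit that $\psi^{k}_{n,r}$ puts essentially all of its mass in an $o(r)$-thin collar around $\partial\Omega$. If $k=0$ there is nothing to prove, since $\Pr(Z_3=-1)=0$; assume $k\ge1$. By Proposition~\ref{pro:Explicit-Form}, $\int_\Omega n\psi^{k}_{n,r}(x)\,dx\to e^{-c}=\Theta(1)$, and by (\ref{eq:Theorem-radius}) one has $nr^{d}=\Theta(\log n)\to\infty$, so the prefactor in the statement is $O(1/\log n)$; it therefore suffices to prove $\int_\Omega n\psi^{k}_{n,r}(x)\,I(x)\,dx=o(\log n)$. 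The naive bound $\Pr(Z_3=k-1)\le1$ gives only $I(x)\le n|B(x,r)\cap\Omega|=\Theta(\log n)$, hence $O(\log n)$ after integration, which is a factor short; the gain must come from $I(x)=o(\log n)$ precisely where $\psi^{k}_{n,r}$ is non-negligible.

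First I would split $\Omega=\Omega(0)\cup\Omega^{\partial}$ with $\Omega^{\partial}=\{x:\mathrm{dist}(x,\partial\Omega)<r\}$ and $\Omega(0)$ as in Section~\ref{sec:Pro-1}. On $\Omega(0)$ I use $I(x)\le n|B(x,r)\cap\Omega|\le nV_{d}(1)r^{d}=\Theta(\log n)$ together with $\int_{\Omega(0)}n\psi^{k}_{n,r}\,dx=o(1)$ (Claim~\ref{pro:Omega(0)}), so that part contributes $o(\log n)$. For $x\in\Omega^{\partial}$ I split the $y$-domain. With $v_{y\backslash x}=|(B(y,r)\setminus B(x,r))\cap\Omega|$ and $\Pr(Z_3=k-1)=(nv_{y\backslash x})^{k-1}e^{-nv_{y\backslash x}}/(k-1)!$, there is a constant $C_{k}$ with $\Pr(Z_3=k-1)\le C_{k}e^{-nv_{y\backslash x}/2}$ for every value of $v_{y\backslash x}$. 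Moreover, since $\partial\Omega$ is $C^{2}$ (or $\Omega$ convex), at scale $r$ the set $\Omega$ near the foot $p$ of $x$ agrees with the half-space $H_{x}=\{z:(z-p)\cdot\nu_{p}\le0\}$ up to a set of volume $O(r^{d+1})$; a direct half-space computation then shows $v_{y\backslash x}\ge c_{d}\,r^{d-1}\|xy\|$ for a fixed dimensional $c_{d}>0$ unless $y$ lies in the slab $\{y:-\mathrm{dist}(x,\partial\Omega)-Cr^{2}<(y-p)\cdot\nu_{p}\le0\}$. Call $\mathcal B(x)$ the intersection of that slab with $B(x,r)$, so $|\mathcal B(x)|\le(\mathrm{dist}(x,\partial\Omega)+Cr^{2})V_{d-1}(1)r^{d-1}$, and $\mathcal G(x)$ its complement in $\Omega\cap B(x,r)$.

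On $\mathcal G(x)$, using $\Pr(Z_3=k-1)\le C_{k}e^{-nv_{y\backslash x}/2}\le C_{k}e^{-n c_{d}r^{d-1}\|xy\|/2}$ and polar coordinates about $x$ (substitution $w=n c_{d}r^{d-1}\|xy\|/2$) yields, uniformly in $x$,
\begin{equation*}
\int_{\mathcal G(x)}n\Pr(Z_3=k-1)\,dy\ \le\ \frac{\Theta(1)}{(nr^{d})^{\,d-1}}\ =\ \Theta\!\left((\log n)^{-(d-1)}\right).
\end{equation*}
On $\mathcal B(x)$, bounding $\Pr(Z_3=k-1)\le C_{k}$ gives $\int_{\mathcal B(x)}n\Pr(Z_3=k-1)\,dy\le C_{k}V_{d-1}(1)\,n\,r^{d-1}\bigl(\mathrm{dist}(x,\partial\Omega)+Cr^{2}\bigr)$. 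Plugging these into $\int_{\Omega^{\partial}}n\psi^{k}_{n,r}(x)I(x)\,dx$ and dividing by $\Theta(nr^{d})=\Theta(\log n)$, the $\mathcal G$-part contributes $\Theta((\log n)^{-(d-1)})\cdot\Theta(1)/\Theta(\log n)=o(1)$, while the $\mathcal B$-part contributes $\Theta(r^{-1})\bigl[\int_{\Omega}n\psi^{k}_{n,r}(x)\,\mathrm{dist}(x,\partial\Omega)\,dx+O(r^{2})\bigr]$. Here $O(r^{2})/r=o(1)$, and running the integration-by-parts argument of Lemma~\ref{lemma:a(t)} with the extra weight $\mathrm{dist}(x,\partial\Omega)$ (which is of order $t(x)$) in the collar decomposition of Claim~\ref{pro:Omega(1,1)} inserts one additional factor $1/(na'(0))=\Theta(r/\log n)$ relative to the unweighted $\Theta(1)$ outcome, so $\int_{\Omega}n\psi^{k}_{n,r}(x)\,\mathrm{dist}(x,\partial\Omega)\,dx=O(r/\log n)=o(r)$ and the $\mathcal B$-part is $o(1)$ as well. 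Adding the $\Omega(0)$ and $\Omega^{\partial}$ contributions finishes the argument.

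The hard part will be the geometric claim that $\mathcal B(x)$ sits in a slab of thickness $\mathrm{dist}(x,\partial\Omega)+O(r^{2})$ — that is, quantifying, uniformly in $x\in\Omega^{\partial}$, how much of the lune $B(y,r)\setminus B(x,r)$ falls inside $\Omega$ in terms of the position of $y$ relative to $x$ and $\partial\Omega$; this is exactly where the $C^{2}$-smoothness (local flatness of $\partial\Omega$ at scale $r$, with an $O(r^{2})$ curvature correction) enters. The remaining ingredients are routine estimates together with a weighted re-run of Lemma~\ref{lemma:a(t)}.
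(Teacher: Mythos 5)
Your route is genuinely different from the paper's, and the difference sits exactly at the step you yourself flag as ``the hard part.'' The paper does not write out this proof in detail either (it defers to the two- and three-dimensional papers), but the one ingredient it does supply is Lemma~\ref{lem:shadow-low-bound}: a curvature-free, position-independent lower bound of order $\|xy\|^{d}$ on the volume of the lune $B_{\mathrm{semi}}\setminus B(y,r)$, valid for every pair $x,y$ with $\|xy\|<r$. Once one argues (using \textbf{Assumption}~I and the choice of the appropriate half of $B(y,r)$) that a constant fraction of this set lies in $\Omega$, one gets $nv_{y\backslash x}\geq c\,n\|xy\|^{d}$ everywhere, hence $\int_{\Omega\cap B(x,r)}n\Pr(Z_3=k-1)\,dy\leq C\,n\int_{0}^{\infty}e^{-cnL^{d}/2}L^{d-1}\,dL=O(1)$ uniformly in $x$; combined with $\int_{\Omega}n\psi^{k}_{n,r}\,dx\rightarrow e^{-c}$ and $nr^{3}\rightarrow\infty$, the entire expression is $O\bigl(1/(n r^{3})\bigr)=o(1)$ in two lines. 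No splitting of the $y$-domain, no slab, and no weighted rerun of Lemma~\ref{lemma:a(t)} is needed.

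Your argument instead rests on the much stronger directional estimate $v_{y\backslash x}\geq c_{d}r^{d-1}\|xy\|$ off a slab of thickness $\mathrm{dist}(x,\partial\Omega)+O(r^{2})$. This is the genuine gap: you do not prove it, and it requires $C^{2}$ smoothness or convexity of $\partial\Omega$, whereas Theorem~\ref{thm:Main3D} assumes only \textbf{Assumption}~I; moreover, unlike Proposition~\ref{pro:Explicit-Form}, there is no smooth-approximation / squeezing step available for this probabilistic quantity, so the reduction to the smooth case would itself need justification. The surrounding bookkeeping is sound --- the reduction to the inner integral, the bound $\Pr(Z_3=k-1)\leq C_{k}e^{-nv_{y\backslash x}/2}$, the treatment of $\Omega(0)$ via Claim~\ref{pro:Omega(0)}, the $\mathcal G$-part computation, and the weighted version of Lemma~\ref{lemma:a(t)} giving $\int_{\Omega}n\psi^{k}_{n,r}(x)\,\mathrm{dist}(x,\partial\Omega)\,dx=O(r/\log n)$ --- but all of that machinery becomes unnecessary once the weaker, unconditional $\Theta(\|xy\|^{d})$ lune bound of Lemma~\ref{lem:shadow-low-bound} is in hand. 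I would either prove your slab estimate in full (quantifying the half-space approximation uniformly over $x$ in the collar) or, better, replace it with the paper's lemma and collapse the proof accordingly.
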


Proposition~\ref{proposition1} can be proved by an argument which is similar to the two or three dimension case~\cite{Ding-RGG2018, Ding-RGG2025, Ding2022-3DRadii}, but relies on the following lemma.

\begin{lemma}\label{lem:shadow-low-bound}

The distance $L $ between centers $x$ and $y$ of two hyperspheres, each of radius $r$, satisfies $L < r$. The hyperplane $\gamma$ passing through $x$ and perpendicular to line segment $xy$ determines the semi-hypersphere of the hypersphere centered at $x$ containing $y$, denoted by $B_{\mathrm{semi}}(x,r)$, then the volume of $B_{\mathrm{semi}}(x,r)\setminus B(y,r)$ is
$$V^{*}(L)=|B_{\mathrm{semi}}\setminus B(y,r)|\geq \frac{(d-1)\pi^{\frac{d-1}{2}} L^d}{16\Gamma\left(\frac{d+1}{2}\right)}\left[ 1+ o\left(\left(\frac{L}{r}\right)^2\right) \right].$$
\end{lemma}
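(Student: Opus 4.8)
The plan is to reduce the $d$-dimensional volume $V^{*}(L)$ to a one-dimensional integral by slicing perpendicular to the segment $xy$, and then to bound the resulting integrand from below by using convexity of $s\mapsto s^{(d-1)/2}$.

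First I would put $x$ at the origin and $y=(L,0,\dots,0)$, so that $\gamma=\{z:z_{1}=0\}$ and $B_{\mathrm{semi}}(x,r)=\{z\in B(0,r):z_{1}\ge 0\}$; for $z=(z_{1},\dots,z_{d})$ write $\rho^{2}=z_{2}^{2}+\dots+z_{d}^{2}$. The key geometric fact is that every point of $B_{\mathrm{semi}}(x,r)$ with $z_{1}\ge L/2$ already belongs to $B(y,r)$, since $|z-y|^{2}=|z|^{2}-2z_{1}L+L^{2}\le |z|^{2}-L^{2}+L^{2}\le r^{2}$. Hence $B_{\mathrm{semi}}(x,r)\setminus B(y,r)$ is exactly the set of $z$ with $0\le z_{1}<L/2$ and $r^{2}-(L-z_{1})^{2}<\rho^{2}\le r^{2}-z_{1}^{2}$, a nonempty $(d-1)$-dimensional spherical shell in each slice because $0<r^{2}-L^{2}\le r^{2}-(L-z_{1})^{2}<r^{2}-z_{1}^{2}$ there. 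Slicing by $\{z_{1}=t\}$ and using that the shell has outer radius $\sqrt{r^{2}-t^{2}}$ and inner radius $\sqrt{r^{2}-(L-t)^{2}}$ gives
\begin{equation*}
V^{*}(L)=V_{d-1}(1)\int_{0}^{L/2}\Bigl[(r^{2}-t^{2})^{\frac{d-1}{2}}-\bigl(r^{2}-(L-t)^{2}\bigr)^{\frac{d-1}{2}}\Bigr]\,dt .
\end{equation*}

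Next I would bound the integrand from below. Since $d\ge 3$, the map $s\mapsto s^{(d-1)/2}$ has nondecreasing derivative $\tfrac{d-1}{2}s^{(d-3)/2}$ on $[0,\infty)$, so writing the bracket as $\int_{r^{2}-(L-t)^{2}}^{r^{2}-t^{2}}\tfrac{d-1}{2}s^{(d-3)/2}\,ds$ and replacing the integrand by its value at the left endpoint gives
\begin{equation*}
(r^{2}-t^{2})^{\frac{d-1}{2}}-\bigl(r^{2}-(L-t)^{2}\bigr)^{\frac{d-1}{2}}\ \ge\ \tfrac{d-1}{2}\bigl(r^{2}-(L-t)^{2}\bigr)^{\frac{d-3}{2}}\bigl(L^{2}-2Lt\bigr).
\end{equation*}
On $[0,L/2]$ one has $(L-t)^{2}\le L^{2}<r^{2}$, hence $\bigl(r^{2}-(L-t)^{2}\bigr)^{(d-3)/2}\ge (r^{2}-L^{2})^{(d-3)/2}$, and $\int_{0}^{L/2}(L^{2}-2Lt)\,dt=L^{3}/4$, so
\begin{equation*}
V^{*}(L)\ \ge\ \frac{(d-1)V_{d-1}(1)}{8}\,(r^{2}-L^{2})^{\frac{d-3}{2}}\,L^{3}.
\end{equation*}
For $d=3$ this step is an equality and reads $V^{*}(L)=\tfrac{\pi}{4}L^{3}$, a useful consistency check. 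Finally, since $L<r$ and $d\ge 3$, $(r^{2}-L^{2})^{(d-3)/2}=r^{d-3}\bigl(1-(L/r)^{2}\bigr)^{(d-3)/2}\ge L^{d-3}\bigl(1-(L/r)^{2}\bigr)^{(d-3)/2}$, whence $V^{*}(L)\ge \tfrac{(d-1)V_{d-1}(1)}{8}L^{d}\bigl(1-(L/r)^{2}\bigr)^{(d-3)/2}$; as $\bigl(1-(L/r)^{2}\bigr)^{(d-3)/2}=1+O((L/r)^{2})\to 1$, the factor $2$ separating $\tfrac18$ from $\tfrac1{16}$ absorbs the correction, and with $V_{d-1}(1)=\pi^{(d-1)/2}/\Gamma\bigl(\tfrac{d+1}{2}\bigr)$ we recover $V^{*}(L)\ge \frac{(d-1)\pi^{(d-1)/2}L^{d}}{16\,\Gamma((d+1)/2)}\bigl(1+o((L/r)^{2})\bigr)$.

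The step that needs the most care is the exact description of $B_{\mathrm{semi}}(x,r)\setminus B(y,r)$ in the first paragraph — specifically the observation that $z_{1}\ge L/2$ forces $z\in B(y,r)$, which is what collapses the problem to a clean one-dimensional integral — together with keeping the constant and the $o((L/r)^{2})$ term uniform in $d$, since the exponent $\tfrac{d-3}{2}$ degenerates to $0$ at $d=3$ and must be handled as a nonnegative quantity throughout.
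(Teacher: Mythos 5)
Your proof is correct and follows essentially the same route as the paper: both reduce $V^{*}(L)$ to the one-dimensional slice integral $V_{d-1}(1)\int_{0}^{L/2}\bigl[(r^{2}-t^{2})^{\frac{d-1}{2}}-(r^{2}-(L-t)^{2})^{\frac{d-1}{2}}\bigr]dt$ and then bound the integrand from below, the only difference being that the paper truncates to $[0,L/4]$ and Taylor-expands while you use monotonicity of $s\mapsto s^{(d-3)/2}$ over the full range. Your version is if anything slightly cleaner (it yields the non-asymptotic bound $\tfrac{(d-1)V_{d-1}(1)}{8}(r^{2}-L^{2})^{\frac{d-3}{2}}L^{3}$, a factor of two better in the leading constant) and your explicit justification of the slice formula fills in a step the paper only asserts.
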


\begin{proof} Notice that $ \left( r^2-t^2\right)^{\frac{d-1}{2}}-\left(r^2-(L  -t )^2\right)^{\frac{d-1}{2}}\geq  0$ for   $t\leq L/2<2r$.
\begin{equation*}
\begin{aligned}
V^{*}(L)=&\int_0^{\frac{L  }{2}} \frac{\pi^{\frac{d-1}{2}}}{\Gamma\left(\frac{d+1}{2}\right)} \left[ \left( r^2-t^2\right)^{\frac{d-1}{2}}-\left(r^2-(L  -t )^2\right)^{\frac{d-1}{2}} \right]dt\\
\geq &\frac{\pi^{\frac{d-1}{2}}}{\Gamma\left(\frac{d+1}{2}\right)}\int_0^{\frac{L  }{4}} \left[ \left( r^2-t^2\right)^{\frac{d-1}{2}}-\left(r^2-(L  -t )^2\right)^{\frac{d-1}{2}} \right]dt\\
\geq &\frac{\pi^{\frac{d-1}{2}}}{\Gamma\left(\frac{d+1}{2}\right)}
\int_0^{ \frac{L  }{4}} \left[ \left( r^2- \left(\frac{L}{4}\right)^2 \right)^{\frac{d-1}{2}}-\left(r^2-\left(\frac{3L}{4}\right)^2\right)^{\frac{d-1}{2}} \right]dt\\
= &\frac{L}{4}\frac{\pi^{\frac{d-1}{2}}}{\Gamma\left(\frac{d+1}{2}\right)}
  \left[ \left( r^2-\left(\frac{L}{4}\right)^2\right)^{\frac{d-1}{2}}-\left(r^2-\left(\frac{3L}{4}\right)^2\right)^{\frac{d-1}{2}} \right]\\
=& \frac{Lr^{d-1}}{4}\frac{\pi^{\frac{d-1}{2}}}{\Gamma\left(\frac{d+1}{2}\right)}
  \left[ \left( 1-\frac{1}{16}\frac{L^2}{r^2}\right)^{\frac{d-1}{2}}-\left(1-\frac{9}{16}\frac{L^2}{r^2} \right)^{\frac{d-1}{2}} \right]\\
=& \frac{Lr^{d-1}}{4}\frac{\pi^{\frac{d-1}{2}}}{\Gamma\left(\frac{d+1}{2}\right)}
  \left[1-1+  \frac{8}{16} \frac{d-1}{2}\left(\frac{L}{r}\right)^2 +o\left(\left(\frac{L}{r}\right)^2\right)  \right]\\
=& \frac{(d-1)\pi^{\frac{d-1}{2}} Lr^{d-1} }{16\Gamma\left(\frac{d+1}{2}\right)}    \left(\frac{L}{r}\right)^2 + \frac{(d-1)\pi^{\frac{d-1}{2}} Lr^{d-1} }{16\Gamma\left(\frac{d+1}{2}\right)} o\left(\left(\frac{L}{r}\right)^2\right)   \\
\geq & \frac{(d-1)\pi^{\frac{d-1}{2}} L^d  }{16\Gamma\left(\frac{d+1}{2}\right)}  +\frac{(d-1)\pi^{\frac{d-1}{2}} L^d}{16\Gamma\left(\frac{d+1}{2}\right)} o\left(\left(\frac{L}{r}\right)^2\right) \\
= &\frac{(d-1)\pi^{\frac{d-1}{2}} L^d}{16\Gamma\left(\frac{d+1}{2}\right)}\left[ 1+ o\left(\left(\frac{L}{r}\right)^2\right) \right].
\end{aligned}
\end{equation*}
\end{proof}

\section{Proof of Proposition~\ref{pro:conclusion-2}}

The proof of Proposition~\ref{pro:conclusion-2} is rather simple. Follow the technique demonstrated in~\cite{Penrose-k-connectivity,Ding-RGG2018, Ding-RGG2025, Ding2022-3DRadii}, to construct two events, $E_n(K)$ and $F_n(K)$,  such that for any $K>0$,
$$
     \left\{\rho(\chi_n;\delta\geq k+1)\leq r_n<\rho(\chi_n;\kappa\geq
k+1)\right\}\subseteq E_n(K)\cup F_n(K),
$$
and
$$
\lim_{n\rightarrow \infty}\Pr\{E_n(K)\}=\lim_{n\rightarrow \infty}\Pr\{F_n(K)\}.
$$
The definition of the events and the convergence results are organised in
the following two propositions.
Here and in the following, we do not introduce the notations in detail. Please refer to~\cite{Penrose-k-connectivity}.

\begin{proposition}(Proposition~5.1,\cite{Penrose-k-connectivity})\label{proposition-appendix-1}
Let $E_n(K)$ be the event that there exists a set $U$ of two or more
points of $\chi_n(U_{r_n})$, of diameter at most $Kr_n$ with
$\chi_n(U_{r_n}\backslash U)\leq k$. Then for all $K>0$ we have
$\lim_{n\rightarrow \infty}\Pr\{E_n(K)\}=0$.
\end{proposition}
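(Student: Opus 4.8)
This is Penrose's Proposition~5.1, reproduced above; the plan is to verify that his first-moment proof transfers from his reference setting to a general $\Omega\subset\mathbb{R}^d$ obeying \textbf{Assumption}~I. I would bound $\Pr\{E_n(K)\}$ by the expected number of \emph{witnesses}: sets $U=\{x_1,\dots,x_m\}\subseteq\chi_n$ with $m\geq2$ and $\mathrm{diam}(U)\leq Kr_n$ whose $r_n$-neighbourhood inside $\Omega$, $V(U):=\bigl(\bigcup_{u\in U}B(u,r_n)\bigr)\cap\Omega$, contains at most $k$ points of $\chi_n\setminus U$. By the first-moment formula the contribution of the sets of size $m$ equals
\[
  \frac{n!}{(n-m)!\,m!}\int_{\Omega^m}\mathbf{1}\!\left[\{x_i\}\ \text{connected in}\ G,\ \mathrm{diam}\{x_i\}\leq Kr_n\right]\Pr\!\left\{\chi_{n-m}\bigl(V(\{x_i\})\bigr)\leq k\right\}\mathrm{d}x_1\cdots\mathrm{d}x_m ,
\]
and the aim is to sum these over $m\geq2$ and obtain $o(1)$.

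The mechanism is a small probability against a polynomial number of positions: $|V(U)|\geq|B(x_1,r_n)\cap\Omega|\geq\theta_0 V_d(1)r_n^d=\Theta(\log n/n)$ by \textbf{Assumption}~I, so $\Pr\{\chi_{n-m}(V(U))\leq k\}\lesssim(n|V(U)|)^k e^{-n|V(U)|}=n^{-\Theta(1)}(\log n)^{O(1)}$, while the admissible $m$-tuples occupy a set of only polynomially small measure. I would split the integral into a bulk part ($\mathrm{dist}(x_1,\partial\Omega)\gtrsim r_n$, where $|V(U)|\approx V_d(1)r_n^d$ and one picks up an extra polynomial decay $n^{(2-d)/d}$, which is where $d\geq3$ helps) and a boundary part (a layer of volume $\lesssim r_n\,\mathrm{Area}(\partial\Omega)$, using that $\partial\Omega$ is $(d-1)$-dimensional with finite area, and where $|V(U)|$ can approach $\tfrac12 V_d(1)r_n^d$ but then $x_1$ lies within $o(r_n)$ of $\partial\Omega$, a correspondingly thinner slab). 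In both regimes the product tends to $0$, and this is precisely where the form of $r_n$ in Theorem~\ref{thm:Main3D} — in particular the coefficient $\tfrac{dk-d+1}{d-1}$ of $\log\log n$ — is used: it fixes the power of $\log n$ carried by $e^{-n|V(U)|}$ so that the estimate closes, the same calibration that drives Lemma~\ref{lemma:a(t)}. I would also use that a cluster of diameter $w>0$ enlarges $|V(U)|$ by at least $\sim\tfrac12 V_{d-1}(1)r_n^{d-1}w$ (a computation in the spirit of Lemma~\ref{lem:shadow-low-bound}), producing an extra $n^{-\Theta(w/r_n)}$ which, after integration over $w$ and over the internal layout of $U$, controls the combinatorics.

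The step I expect to need the most care is making the estimate uniform over all admissible cluster sizes — $U$ may a priori have up to $\Theta(\log n)$ points, and clusters with $r_n\leq\mathrm{diam}(U)\leq Kr_n$ carry an additional spanning-tree factor — and then checking that every resulting exponent lands strictly on the favourable side; for $d\geq3$ the extra polynomial decay noted above makes this comfortable, whereas at $d=2$ the analogous estimate is borderline. The only genuinely new input for a general region is that \textbf{Assumption}~I supplies the uniform constant $\theta_0>0$, in place of the more explicit local volume bounds available in Penrose's setting, together with the finite-area hypothesis on $\partial\Omega$ which bounds the boundary-layer count (and, for the boundary estimate to be as sharp as stated, the $C^2$-smoothness or convexity the paper invokes to guarantee \textbf{Assumption}~I); all of these are hypotheses of Theorem~\ref{thm:Main3D}, so Penrose's proof applies with only routine modifications.
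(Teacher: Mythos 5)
Your sketch identifies the right ingredients (the uniform volume lower bound from \textbf{Assumption}~I, the enlargement of the vacant region caused by a second cluster point, and the calibration of $r_n$), but it is organized as a direct first moment over all witness sets $U$, which is not how the paper argues and which, as written, does not close. The paper simply imports Penrose's three lemmas: $\Pr\{E_n(K)\}$ is bounded by $\sup_{x\in\Omega}\Pr\{E_n^x(K)\}/\psi^k_{n,r_n}(x)$, this ratio is shown to vanish first for some small $K$ and then for all $K$, and the only new input for a general region is $C|B(x,r)|\leq|B(x,r)\cap\Omega|$. That normalization by $\psi^k_{n,r_n}(x)$ is not cosmetic, and your proposal omits it. In the boundary layer the total mass $n\int\psi^k_{n,r_n}(x)\,dx$ is $\Theta(1)$ (this is exactly Claim~\ref{pro:Omega(1,1)}), and under \textbf{Assumption}~I alone the vacancy probability $e^{-n|V(U)|}$ need only be $n^{-\theta_0\cdot 2(d-1)/d}$ for a possibly tiny $\theta_0$; so a direct union bound over positions of $x_1$ cannot yield $o(1)$ there. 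What tends to zero is the \emph{conditional} cost of the companion point relative to $\psi^k_{n,r_n}(x)$, multiplied by the finite total mass supplied by Proposition~\ref{pro:Explicit-Form} --- this is precisely the shape of Proposition~\ref{proposition1}.

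The step you flag as "needing the most care" is in fact the one that fails. Near the boundary only the half-ball of $x$ facing into $\Omega$ is guaranteed to lie in $\Omega$, so the guaranteed enlargement produced by a companion at distance $w$ is the sliver $B_{\mathrm{semi}}(x,r)\setminus B(y,r)$ of Lemma~\ref{lem:shadow-low-bound}, of volume $\Theta(w^3r_n^{d-3})$ --- cubic in $w$, not the linear $\tfrac12V_{d-1}(1)r_n^{d-1}w$ you assume (that is the interior lune, which may protrude outside $\Omega$ exactly where the estimate is tight). With the cubic enlargement, your sum over cluster sizes compares a positional factor $\exp(nV_d(1)w^d)$ against a decay $\exp(-cnw^3r_n^{d-3})$; at $d=3$ both exponents are $\Theta(\log n)(w/r_n)^3$ with $V_3(1)=\tfrac{4\pi}{3}$ on the wrong side of the constant $\tfrac{\pi}{8}$ from Lemma~\ref{lem:shadow-low-bound}, so the expected number of witness sets diverges. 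Penrose's argument never sums over subsets: it passes to the leftmost point of $U$ together with a single companion, reducing everything to a two-point estimate. Your proposal needs to be recast in that ratio-plus-two-point form (and your aside that $d=2$ is "borderline" is misleading --- the planar case is Theorem~\ref{thm:combined-Main} and goes through by the same scheme).
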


\begin{proposition}\label{proposition-appendix-2}(Proposition~5.2,\cite{Penrose-k-connectivity})
Let $F_n(K)$ be the event that there are disjoint subsets $U,V,W$ of
$\chi_n$, such that $\mbox{card}(W)\leq k$, and such that $U$ and
$V$ are connected components of the $r_n$ graph on $\chi_n\setminus
W$, with $\mbox{diam}(U)>Kr_n$ and $\mbox{diam}(V)>Kr_n$. Then there
exists $K>0$, such that $\lim_{n\rightarrow \infty}\Pr\{F_n(K)\}=0$.
\end{proposition}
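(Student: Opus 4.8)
The plan is to follow the Peierls-type coarse-graining argument of Penrose~\cite{Penrose-k-connectivity} (cf.\ also \cite{Ding-RGG2018, Ding-RGG2025, Ding2022-3DRadii}), adapted to dimension $d$ and to a general region $\Omega$ through \textbf{Assumption}~I. As in the proof of Proposition~\ref{pro:conclusion-1}, it is enough to bound $\Pr\{F_n(K)\}$ for the Poissonized process $\mathcal{P}_n$ of intensity $n$, since a witness of $F_n(K)$ for $\chi_n$ transfers to $\mathcal{P}_n$ by the usual de-Poissonization; the whole argument then reduces to estimating the probability that a macroscopic piece of $\Omega$ is nearly devoid of points.

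The observation driving everything is elementary: if $U,V,W$ witness $F_n(K)$ and $z\in\mathcal{P}_n$ satisfies $0<\mathrm{dist}(z,U)<r_n$, then $z\notin U$, yet $z$ is joined by an edge of $G(\mathcal{P}_n,r_n)$ to some $u\in U$; were $z\notin W$, that edge would survive in $G(\mathcal{P}_n\setminus W,r_n)$ and put $z$ into the component $U$, a contradiction. Hence, writing $R_U:=\{x\in\Omega:\,0<\mathrm{dist}(x,U)<r_n\}$, we get $\mathcal{P}_n(R_U)\le\mathrm{card}(W)\le k$, and likewise $\mathcal{P}_n(R_V)\le k$. On the other hand $\mathrm{diam}(U)>Kr_n$ with $U$ connected in $G(\mathcal{P}_n\setminus W,r_n)$: picking $a,b\in U$ with $\|ab\|>Kr_n$, a connecting path with steps $\le r_n$, and projecting onto the line $ab$, one extracts $L\ge\lfloor K/2\rfloor$ points $w_1,\dots,w_L\in U$ whose balls $B(w_j,r_n)$ are pairwise disjoint; as each $w_j\in\Omega$, \textbf{Assumption}~I yields $|B(w_j,r_n)\cap\Omega|\ge c_0V_d(r_n)$ for all large $n$, so $|R_U|\ge Lc_0V_d(r_n)=\Theta(K)r_n^d$. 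Thus a witness of $F_n(K)$ forces a region of volume $\gtrsim Kr_n^d$ to carry at most $k$ points of $\mathcal{P}_n$.

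To exploit this via a union bound I would discretize $\mathbb{R}^d$ into cubes of side $\epsilon r_n$ for a suitably small constant $\epsilon=\epsilon(d,c_0)$ and let $\mathcal{U}$ be the cubes meeting $U$. Inside each $B(w_j,r_n)$, a pigeonhole argument (using $|B(w_j,r_n)\cap\Omega|\ge c_0V_d(r_n)$ and the $O(1)$ bound on the number of cubes contained in the ball) produces a cube $Q_j\subseteq B(w_j,r_n)$ with $|Q_j\cap\Omega|\ge c_1(\epsilon r_n)^d$, $c_1=c_1(d,c_0)>0$; if $Q_j\notin\mathcal{U}$ then $Q_j\cap\Omega\subseteq R_U$, whence $\mathcal{P}_n(Q_j)\le 2k$. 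If instead $Q_j\in\mathcal{U}$ for most $j$, then $U$ is ``locally dense'' throughout its extent, and one runs the same argument on $V$; the residual case --- both components locally dense everywhere --- is disposed of by inspecting the moat of cubes lying in neither $\mathcal{U}$ nor $\mathcal{V}$, whose interior-facing part is again $\Omega$-substantial because \textbf{Assumption}~I forbids $\Omega$ from being thin at scale $r_n$ and hence keeps the interface between the two solid blobs away from $\partial\Omega$. In all cases one extracts $m\ge\Theta(K)$ pairwise disjoint cubes, lying within $O(Kr_n)$ of a common cube, each with $\Omega$-volume $\ge c_1(\epsilon r_n)^d$, and all simultaneously carrying at most $2k$ points of $\mathcal{P}_n$.

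The probability step then closes the proof. For a fixed such cube $Q$, $\mathcal{P}_n(Q\cap\Omega)$ is Poisson with mean $\mu\ge c_1(\epsilon r_n)^dn$, which by the form (\ref{eq:Theorem-radius}) of $r_n$ equals $\beta\log n\,(1+o(1))$ for a constant $\beta=\beta(d,c_0)>0$; hence $\Pr\{\mathcal{P}_n(Q\cap\Omega)\le 2k\}\le C_k\mu^{2k}e^{-\mu}=n^{-\beta+o(1)}$, and by independence over disjoint cubes the probability that all $m$ extracted cubes are deficient is at most $n^{-\beta m+o(1)}$. Since $m\ge\Theta(K)$, summing over the $O(n)$ positions of a base cube and the $K^{O(K)}$ local patterns of the $m$ cubes gives $\Pr\{F_n(K)\}\le K^{O(K)}\,n^{\,1-\Theta(\beta K)+o(1)}$, which tends to $0$ as soon as $K$ is chosen large enough that $\Theta(\beta K)>1$; this is the $K$ asserted by the proposition. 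The step I expect to be the main obstacle is the geometric bookkeeping of the third paragraph: securing $\Theta(K)$ genuinely $\Omega$-substantial deficient cubes even when a component clings to $\partial\Omega$, and cleanly settling the ``both components locally dense'' case. That is exactly where \textbf{Assumption}~I is indispensable --- it prevents a component from slipping into an arbitrarily thin sliver of $\Omega$ where the local point density would fall below $\Theta(\log n)$ --- and it is the part that, as in \cite{Penrose-k-connectivity, Ding-RGG2018, Ding-RGG2025, Ding2022-3DRadii}, requires careful but routine estimates, here carried out with the $d$-dimensional ball volume $V_d(\cdot)$ in place of planar area.
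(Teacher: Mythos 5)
Your strategy is the same one the paper relies on: the paper offers no self-contained proof of this proposition, but simply imports Penrose's Proposition~5.2 and remarks that \textbf{Assumption}~I supplies the uniform bound $C|B(x,r)|\leq|B(x,r)\cap\Omega|\leq|B(x,r)|$ needed for Penrose's estimates to survive the passage to a general region in $\mathbb{R}^d$. Your reconstruction of that Peierls/coarse-graining argument is sound in its first and last stages: the observation that the moat $R_U=\{x:0<\mathrm{dist}(x,U)<r_n\}$ can contain only points of $W$, hence at most $k$ points, is correct; the extraction of $\Theta(K)$ centres $w_j\in U$ with pairwise disjoint balls via projection onto the segment $ab$ is correct; and the closing Chernoff-plus-union-bound computation ($\mu=\Theta(\log n)$ per cube, $O(n)$ base positions, $e^{O(K\log K)}$ local patterns, $n^{-\Theta(\beta K)}$ per pattern) is the right bookkeeping and does close once $K$ is large.

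The genuine gap is exactly the step you flag yourself, and it is not a routine one --- it is the actual content of Penrose's proof. Two things go wrong in your third paragraph as written. First, if every pigeonholed cube $Q_j$ happens to meet $U$ (so no $Q_j$ lies in $R_U$), your fallback is to examine the ``interface'' of cubes in neither $\mathcal{U}$ nor $\mathcal{V}$; but when $U$ and $V$ are far apart there is no interface of controlled size between them, and the correct object is instead the outer lattice boundary of the animal $\mathcal{U}$, whose cardinality must be bounded below (by a discrete isoperimetric/diameter inequality) by $\Theta(K/\epsilon)$; none of this is set up in the proposal. Second, and more seriously, those outer-boundary cubes lie \emph{outside} the balls $B(w_j,r_n)$, so your pigeonhole argument does not apply to them, and they may have negligible or even empty intersection with $\Omega$ when the component hugs $\partial\Omega$. \textbf{Assumption}~I is a statement about balls centred at points \emph{of} $\Omega$; it does not directly make an arbitrary cube adjacent to $\mathcal{U}$ carry $\Omega$-volume of order $(\epsilon r_n)^d$. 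Converting the $\geq\Theta(K)$ deficient boundary cubes into $\geq\Theta(K)$ deficient \emph{and} $\Omega$-substantial cubes is precisely the boundary-versus-interior case analysis that occupies Penrose's Proposition~5.2 and its two- and three-dimensional adaptations in \cite{Ding-RGG2018, Ding-RGG2025, Ding2022-3DRadii}, and it is the part your proposal defers rather than carries out. Until that extraction is done, the union bound of your final paragraph has nothing to sum over in the ``locally dense'' case, so the argument is incomplete at its decisive point.
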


Proposition~\ref{proposition-appendix-1} is immediate from the following three lemmas (\cite{Penrose-k-connectivity}).
\begin{lemma}(Lemma~5.1,\cite{Penrose-k-connectivity})
     Let $K>0$. Then with $\psi^{k}_{n,r}(x)$ defined previously,
     $$
          \Pr\{E_n(K)\}\leq \sup_{x\in\Omega} \left(
          \frac{\Pr\{E_n^x(K)\}}{\psi^{k}_{n,r_n}(x)}
          \right).
     $$
\end{lemma}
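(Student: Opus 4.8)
The plan is to pass from the global event $E_n(K)$ to the family of pointed events $\{E_n^x(K):x\in\Omega\}$ of \cite{Penrose-k-connectivity} by a first-moment (Palm) argument, and then to convert the resulting integral bound into the stated supremum bound by normalizing against $\psi^k_{n,r_n}$, using Proposition~\ref{pro:Explicit-Form}.

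First I would record the combinatorial domination underlying the reduction. In the notation of \cite{Penrose-k-connectivity}, $E_n^x(K)$ is the pointed version of $E_n(K)$: after inserting a point at $x$, it is the event that this point is the canonical (say, lexicographically least) vertex of a set $U$ of at least two points, of diameter at most $Kr_n$, such that at most $k$ points of $\chi_n$ lie in $U_{r_n}\setminus U$. If $E_n(K)$ occurs, then such a set $U$ exists, its canonical vertex is a point of $\chi_n$, and that vertex witnesses $E_n^x(K)$; since distinct clumps have distinct canonical vertices, $\mathbf{1}\{E_n(K)\}\le\sum_{x\in\chi_n}\mathbf{1}\{E_n^x(K)\}$.

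Next I would take expectations and apply the Palm (first-moment) identity for the point process over the unit-volume region $\Omega$ --- the same tool, Theorem~1.6 of \cite{Penrose-RGG-book}, already used in Section~\ref{sec:Poissonized-section} --- to get $\Pr\{E_n(K)\}\le n\int_\Omega\Pr\{E_n^x(K)\}\,\mathrm{d}x$, where $\Pr\{E_n^x(K)\}$ denotes the associated Palm probability. Factoring the integrand through $\psi^k_{n,r_n}$ then gives
\[
\Pr\{E_n(K)\}\;\le\;\Bigl(\sup_{x\in\Omega}\frac{\Pr\{E_n^x(K)\}}{\psi^k_{n,r_n}(x)}\Bigr)\;n\int_\Omega\psi^k_{n,r_n}(x)\,\mathrm{d}x ,
\]
and by Proposition~\ref{pro:Explicit-Form} the last factor tends to $e^{-c}<1$ (since $c>0$), hence is at most $1$ for all sufficiently large $n$; as the lemma is only ever used in the limit $n\to\infty$, this is enough. (If one prefers a bound valid for every $n$, the harmless constant $e^{-c}$ can simply be carried along.)

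The main obstacle is not in this chain of inequalities but in matching the two definitions cleanly: one must verify that the separation condition defining $E_n(K)$ in Proposition~\ref{proposition-appendix-1} (deleting at most $k$ vertices disconnects $U$ from the rest of the $r_n$-graph) is exactly the condition $\chi_n(U_{r_n}\setminus U)\le k$ entering $E_n^x(K)$, and that the canonical-vertex selection is measurable and injective on clumps, together with writing down the Palm identity for the binomial process $\chi_n$; all of this is dimension-independent and is precisely what the notational setup in \cite{Penrose-k-connectivity} is for. The genuinely quantitative difficulty --- proving $\sup_{x\in\Omega}\Pr\{E_n^x(K)\}/\psi^k_{n,r_n}(x)\to0$, which is what makes the lemma useful --- is what the two remaining lemmas of Proposition~\ref{proposition-appendix-1} supply, and is where the shadow estimate of Lemma~\ref{lem:shadow-low-bound} and the integral asymptotics of Lemma~\ref{lemma:a(t)} come back into play.
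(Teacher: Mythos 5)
The paper offers no proof of this lemma --- it is imported verbatim from Penrose's $k$-connectivity paper --- and your reconstruction is exactly the standard argument behind it: single out a canonical vertex of the witnessing clump, apply the first-moment/Palm identity to get $\Pr\{E_n(K)\}\le n\int_\Omega\Pr\{E_n^x(K)\}\,dx$, and normalize by $\psi^k_{n,r_n}$. Your explicit handling of the residual factor $n\int_\Omega\psi^k_{n,r_n}(x)\,dx$ (which is not identically $\le 1$ but tends to $e^{-c}<1$ by Proposition~\ref{pro:Explicit-Form}, so the stated inequality holds for all large $n$, which is all the lemma is ever used for) is the right and necessary caveat, so I see no gap.
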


\begin{lemma}\label{lemma5}(Lemma~5.2,\cite{Penrose-k-connectivity}) There exists $K\in (0,1]$, such that
   $$
        \lim_{n\rightarrow\infty}\mathop{\sup}_{x\in \Omega}
        \frac{\Pr\left\{E_n^x(K)\right\}}{\psi_{n,r_n}^k(x)}=0.
   $$
\end{lemma}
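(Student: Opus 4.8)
The plan is to adapt the argument of~\cite{Penrose-k-connectivity} (the proof of Lemma~5.2 there) to the present setting of a region in $\mathbb{R}^d$ satisfying \textbf{Assumption}~I, replacing the planar and three-dimensional volume estimates used there by their $\mathbb{R}^d$ analogues; the only genuinely $d$-dependent ingredient is a crescent-volume bound in the spirit of Lemma~\ref{lem:shadow-low-bound}, though a stronger one is needed here, of order $\lVert xy\rVert\,r_n^{d-1}$, whereas the $\lVert xy\rVert^{d}$ estimate recorded in Lemma~\ref{lem:shadow-low-bound} is what is used in Proposition~\ref{proposition1}. Recall that $E_n^x(K)$ is, in the notation of~\cite{Penrose-k-connectivity}, the localized counterpart of the event $E_n(K)$ of Proposition~\ref{proposition-appendix-1}: essentially, the event that $\chi_n\cup\{x\}$ contains a set $U\ni x$ with $\lvert U\rvert\ge 2$, $\mathrm{diam}(U)\le Kr_n$, and at most $k$ points of $\chi_n$ in $U_{r_n}\setminus U$, where $U_{r_n}=\bigcup_{u\in U}B(u,r_n)$. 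Since $K\le 1$ forces $U\subseteq B(x,r_n)$, we have $B(x,r_n)\cup B(y,r_n)\subseteq U_{r_n}$ for each $y\in U$; writing $V_y:=\bigl(B(x,r_n)\cup B(y,r_n)\bigr)\cap\Omega$, Palm theory (splitting according to $m=\lvert U\rvert\ge 2$, with $m=2$ dominant) gives
$$
\Pr\{E_n^x(K)\}\ \le\ n\int_{B(x,Kr_n)\cap\Omega}\Pr\bigl\{\mathcal{P}_n(V_y)\le k\bigr\}\,\mathrm{d}y\ +\ R_n(x),
$$
where $R_n(x)$ collects the terms with $m\ge 3$; these carry extra powers of $nr_n^d$ and are handled in the same way, after integrating over the shape of $U$ (a more spread-out cluster having a proportionally larger neighbourhood $U_{r_n}\cap\Omega$).

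The crux is a lower bound on the excess volume $\Delta_y:=\lvert V_y\rvert-\lvert B(x,r_n)\cap\Omega\rvert=\bigl\lvert\bigl(B(y,r_n)\setminus B(x,r_n)\bigr)\cap\Omega\bigr\rvert$. Elementary geometry gives $\lvert B(y,r_n)\setminus B(x,r_n)\rvert\ge c_d\,\lVert xy\rVert\,r_n^{d-1}$ for $\lVert xy\rVert\le Kr_n$ and a dimensional constant $c_d>0$, and by \textbf{Assumption}~I (or, in the preliminary $C^2$ case, after flattening $\partial\Omega$ near $x$) a fixed positive fraction of this crescent survives inside $\Omega$, uniformly in $x$, so $\Delta_y\ge c_d'\,\lVert xy\rVert\,r_n^{d-1}$. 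Assumption~I also gives $\lvert B(x,r_n)\cap\Omega\rvert\ge\mu V_d(1)r_n^d$ for some $\mu>0$, whence $\lvert V_y\rvert/\lvert B(x,r_n)\cap\Omega\rvert\le 1+1/\mu$ and $n\lvert V_y\rvert\to\infty$ uniformly, so $\Pr\{\mathcal{P}_n(V_y)\le k\}\le 2\,(n\lvert V_y\rvert)^{k}e^{-n\lvert V_y\rvert}/k!$ for $n$ large. Dividing the displayed bound by $\psi^k_{n,r_n}(x)$ therefore yields, uniformly in $x$,
$$
\frac{\Pr\{E_n^x(K)\}}{\psi^k_{n,r_n}(x)}\ \le\ C\,n\int_{B(x,Kr_n)}e^{-c_d' n r_n^{d-1}\lVert xy\rVert}\,\mathrm{d}y\ +\ o(1),
$$
and passing to polar coordinates with the substitution $u=c_d' n r_n^{d-1}\rho$ turns the integral into the convergent integral $\int_0^{\infty}e^{-u}u^{d-1}\,\mathrm{d}u=\Gamma(d)$ times the explicit prefactor $(n r_n^d)^{-(d-1)}$, so the whole quantity is $O\bigl((n r_n^d)^{-(d-1)}\bigr)=o(1)$ because $n r_n^d\to\infty$. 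Any $K\in(0,1]$ works in the limit; a small $K$ merely keeps the constants transparent and ensures $U\subseteq B(x,r_n)$.

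The main obstacle is ensuring the excess-volume estimate $\Delta_y\gtrsim\lVert xy\rVert\,r_n^{d-1}$ holds \emph{uniformly} over all $x\in\Omega$, and in particular for $x$ within $O(r_n)$ of $\partial\Omega$: there $\lvert B(x,r_n)\cap\Omega\rvert$ degenerates and the crescent $B(y,r_n)\setminus B(x,r_n)$, which hugs $\partial B(x,r_n)$ on the side facing $y$, can point partly out of $\Omega$. This is precisely where \textbf{Assumption}~I (or, in the $C^2$-smooth case treated first, the local flatness of $\partial\Omega$) is indispensable, and it is dealt with exactly as in the two- and three-dimensional predecessors~\cite{Ding-RGG2018,Ding-RGG2025,Ding2022-3DRadii}: after straightening $\partial\Omega$ near $x$ one checks that a definite fraction of the crescent remains inside $\Omega$. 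The rest — the $m\ge 3$ terms, and the passage from $E_n^x(K)$ back to $E_n(K)$ via Lemma~\ref{lemma5} of~\cite{Penrose-k-connectivity} — carries over verbatim from~\cite{Penrose-k-connectivity}.
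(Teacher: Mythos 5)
Your proposal is correct and follows essentially the route the paper itself relies on: the paper offers no proof of this lemma beyond citing Lemma~5.2 of \cite{Penrose-k-connectivity} and remarking that \textbf{Assumption}~I supplies the uniform two-sided bound $C|B(x,r)|\leq|B(x,r)\cap\Omega|\leq|B(x,r)|$ needed for that argument to transfer, and your reconstruction is exactly that argument adapted to $\mathbb{R}^d$. You also correctly isolate the one genuinely $d$-dependent (and genuinely delicate) ingredient — the uniform excess-volume bound $|(B(y,r_n)\setminus B(x,r_n))\cap\Omega|\gtrsim \lVert xy\rVert\,r_n^{d-1}$ near $\partial\Omega$, for which the $\lVert xy\rVert^{d}$ estimate of Lemma~\ref{lem:shadow-low-bound} would not suffice — which is precisely the point the paper leaves to the cited reference.
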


\begin{lemma} (Lemma~5.3,\cite{Penrose-k-connectivity}) Suppose $0<K'<K<\infty$. Then
   $$
        \lim_{n\rightarrow\infty}\mathop{\sup}_{x\in \Omega}
        \frac{\Pr\left\{E_n^x(K)\backslash E_n^x(K')\right\}}{\psi_{n,r_n}^k(x)}=0.
   $$
\end{lemma}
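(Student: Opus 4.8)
The statement is Lemma~5.3 of \cite{Penrose-k-connectivity}, whose proof there is already carried out in general dimension $d$. The plan is therefore to check that that argument survives the weaker hypothesis on the region — \textbf{Assumption}~I in place of the cubes or balls of \cite{Penrose-k-connectivity} — the point being that the only place where the shape of $\Omega$ enters is through elementary volume estimates that are already in hand. Recall that $E_n^x(K)$ is the event that, in $\mathcal{P}_n\cup\{x\}$, the inserted point $x$ lies in a set $U$ which is connected in the $r_n$-graph, has $|U|\ge2$ and $\mathrm{diam}(U)\le Kr_n$, and whose $r_n$-neighbourhood $U^{\oplus}:=\bigcup_{z\in U}B(z,r_n)$ meets $\mathcal{P}_n\setminus U$ in at most $k$ points. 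On $E_n^x(K)\setminus E_n^x(K')$ every such witness $U$ has $\mathrm{diam}(U)>K'r_n$, so, taking a diametral pair $p,q\in U$ and using $\|pq\|\le\|px\|+\|xq\|$, some $w\in U$ satisfies $\|xw\|>K'r_n/2$.

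The first step, and the only one where the region matters, is the uniform volume gain
\[
 n\,|U^{\oplus}\cap\Omega|\ \ge\ n\,|B(x,r_n)\cap\Omega|\ +\ c_0(K',d)\,nr_n^{\,d},\qquad c_0(K',d)>0 .
\]
Since $x\in U$ gives $B(x,r_n)\cup B(w,r_n)\subseteq U^{\oplus}$, it suffices to bound $|(B(w,r_n)\cap\Omega)\setminus B(x,r_n)|$ from below. For a unit vector $v\perp(w-x)$ the point $p=w+r_nv$ satisfies $\|p-x\|^2=r_n^2+\|xw\|^2>r_n^2\bigl(1+(K')^2/4\bigr)$, so a neighbourhood of $p$ of radius $\asymp(K')^2 r_n$ lies outside $B(x,r_n)$; choosing $v$ so that in addition $w+r_nv\in\Omega$ is possible because — with $\partial\Omega$ either $C^2$-smooth or, in general, approximated by the smooth $\Omega_{\epsilon^{\pm}}$ of Section~\ref{sec:Pro-1} — $\Omega\cap B(w,r_n)$ is, up to a curvature layer of volume $O(r_n^{\,d+1})$, the intersection of $B(w,r_n)$ with a half-space whose inward directions form at least a hemisphere, which meets the great subsphere $\{v:v\perp(w-x)\}$. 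This yields a chunk of $(B(w,r_n)\cap\Omega)\setminus B(x,r_n)$ of volume $\ge c_0(K',d)r_n^{\,d}$, the analogue in the regime $L\asymp K'r_n$ of the ``extra lune'' lower bound of Lemma~\ref{lem:shadow-low-bound}. Because $nr_n^{\,d}\sim\frac{2(d-1)}{d\,V_d(1)}\log n$, the displayed inequality converts $e^{-n|U^{\oplus}\cap\Omega|}$ into $n^{-\beta}e^{-n|B(x,r_n)\cap\Omega|}$ with $\beta=\beta(K',d)>0$, on every witness.

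With this gain available the argument of \cite{Penrose-k-connectivity} goes through. Writing $U=\{x,y_1,\dots,y_{m-1}\}$ and applying the Mecke/Palm formula bounds $\Pr\{E_n^x(K)\setminus E_n^x(K')\}$ by $\sum_{m\ge2}\frac{n^{m-1}}{(m-1)!}\int\mathbf 1[\,U\text{ connected},\ \mathrm{diam}(U)\in(K'r_n,Kr_n]\,]\,\Pr\{\mathcal{P}_n(U^{\oplus}\cap\Omega)\le k\}\,dy_1\cdots dy_{m-1}$, where $\Pr\{\mathcal{P}_n(U^{\oplus}\cap\Omega)\le k\}\le(1+n|U^{\oplus}\cap\Omega|)^k e^{-n|U^{\oplus}\cap\Omega|}$ and $n|U^{\oplus}\cap\Omega|\le nV_d((K+1)r_n)=O(\log n)$. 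Witnesses with $|U|$ beyond a fixed multiple of $\log n$ are discarded via the Poisson tail $\Pr\{\mathcal{P}_n(B(x,(K+1)r_n))\ge m\}\le e^{-m}$, which contributes $o(n^{-2})=o(\psi^k_{n,r_n}(x))$; for the rest, connectedness of $U$ is used (integrating the $y_j$'s one at a time against fresh reach volume, as in \cite{Penrose-k-connectivity}) to keep the combinatorial factor polylogarithmic once $n^{-\beta}e^{-n|B(x,r_n)\cap\Omega|}$ has been pulled out. The outcome is $\Pr\{E_n^x(K)\setminus E_n^x(K')\}\le C(\log n)^{A}\,n^{-\beta}\,e^{-n|B(x,r_n)\cap\Omega|}$ with $C,A,\beta$ independent of $x$. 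Finally $\psi^k_{n,r_n}(x)\ge c'(\log n)^{k}e^{-n|B(x,r_n)\cap\Omega|}$ for all $x\in\Omega$, because $n|B(x,r_n)\cap\Omega|\le nV_d(r_n)=O(\log n)$ while \textbf{Assumption}~I forces $n|B(x,r_n)\cap\Omega|\ge\alpha\,nV_d(r_n)\to\infty$; dividing gives $\displaystyle\sup_{x\in\Omega}\frac{\Pr\{E_n^x(K)\setminus E_n^x(K')\}}{\psi^k_{n,r_n}(x)}=O\!\bigl(n^{-\beta}(\log n)^{A-k}\bigr)\longrightarrow0$.

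I expect the genuine obstacle to be the bookkeeping in that last step: organising the first-moment sum over the connected witness $U$ so that, once the $n^{-\beta}$ gain is extracted, only logarithmic factors remain — the naive estimate ``each added point of $U$ ranges freely over $B(x,Kr_n)$'' is far too lossy when $K>1$, as it costs $n^{\Theta(K^d)}$. This is, however, exactly the combinatorial–geometric estimate already carried out in \cite{Penrose-k-connectivity}, and it involves no idea particular to the dimension $d$ or to the region; the only new input needed is the boundary volume gain above, which is provided by Lemma~\ref{lem:shadow-low-bound} together with the smooth approximation of Section~\ref{sec:Pro-1}.
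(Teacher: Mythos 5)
Your proposal is correct in outline and follows essentially the same route as the paper, which itself gives no independent proof of this lemma: it simply cites Lemma~5.3 of \cite{Penrose-k-connectivity} and remarks that \textbf{Assumption}~I supplies the two-sided volume bound $C|B(x,r)|\leq|B(x,r)\cap\Omega|\leq|B(x,r)|$ needed for Penrose's argument to carry over to general regions. Your sketch fills in exactly the ingredients the paper leaves implicit (the uniform volume gain for witnesses of diameter exceeding $K'r_n$, via the lune estimate of Lemma~\ref{lem:shadow-low-bound}, and the Mecke-formula first-moment bookkeeping), so it is a more detailed rendering of the same approach rather than a different one.
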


 The \textbf{Assumption}~1 condition in Theorem~\ref{thm:Main3D} and Proposition~\ref{pro:conclusion-2}, makes that
   $|B(x,r)\cap\Omega|$ in  $
       \psi^k_{n,r}(x)= \frac{\left(n|B(x,r)\cap\Omega|\right)^k
       e^{-n|B(x,r)\cap\Omega|}}{k!}
$
can be bounded by $C|B(x,r)|\leq|B(x,r)\cap\Omega|\leq |B(x,r)|$ where constant $C>0$.

As a result, follow the   approach demonstrated in~\cite{Penrose-k-connectivity,Ding-RGG2018, Ding-RGG2025, Ding2022-3DRadii},  Proposition~\ref{proposition-appendix-1} and~\ref{proposition-appendix-2} holds for general  regions as long as \textbf{Assumption}~1 being satisfied.
Based on these two generalised propositions, a squeezing argument can lead to the following
$$\lim_{n\rightarrow\infty}\Pr\left\{\rho(\chi_n;\delta\geq k+1)=\rho(\chi_n;\kappa\geq
k+1)\right\}=1.$$
That is,  Proposition~\ref{pro:conclusion-2} holds.
Therefore,  the proof of Theorem~\ref{thm:Main3D} is completed.

\section*{References}
\bibliographystyle{elsarticle-num}

\bibliography{RGGHD}

\end{document}